%

\documentclass[num-refs]{wiley-networks}

\usepackage{orcidlink}
\usepackage{nicefrac}
\usepackage{comment}
\usepackage[noend]{algorithmic}
\usepackage{algorithm}
\usepackage{enumitem}
\usepackage{booktabs} 
\usepackage{multirow}


\newcommand{\p}{k}
\newcommand{\NP}{$\mathcal{NP}$}

\newcommand{\review}[1]{{\color{black}#1}}
\newcommand{\R}{\mathbb{R}}
\newcommand{\PrimerModelo}{{\((\mathcal{M}_1)\)}}
\newcommand{\SegundoModelo}{{\((\mathcal{M}_2)\)}}
\newcommand{\TercerModelo}{{\((\mathcal{M}_3)\)}}
\newcommand{\TercerModeloRelajado}{{\((\mathcal{MR}_3)\)}}

\newcommand{\DP}{\mathsf{d_p}}
\newcommand{\LL}{\mathsf{L}}

\newcommand{\MaxT}{$\mathsf{\times}$}
\newcommand{\MaxText}{$\mathsf{T_{M}}$}
\newcommand{\LC}{\textsf{LC}}
\newcommand{\SPC}{\textsf{SPC}}
\newcommand{\LBC}{\textsf{LBC}}
\newcommand{\SC}{\textsf{S-C}}
\newcommand{\No}{\textsf{None}}
\newcommand{\CGC}{\textsf{CGC}}

\DeclareMathOperator*{\argmin}{arg\,min}

\allowdisplaybreaks

\papertype{Original Article}
\paperfield{Networks} 

\title{Optimizing Connected Components Graph Partitioning with Minimum Size Constraints using Integer Programming and Spectral Clustering Techniques}


\author[1\authfn{1}\authfn{2}]{Mishelle Cordero}
\author[2,1\authfn{1}]{Andrés Miniguano--Trujillo\orcidlink{0000-0002-0877-628X}}
\author[3,1\authfn{1}]{Diego Recalde\orcidlink{0000-0002-6981-2272}}
\author[3\authfn{1}]{Ramiro Torres\orcidlink{0000-0003-2057-1719}}
\author[3\authfn{1}]{Polo Vaca\orcidlink{0000-0003-0387-6668}}

\contrib[\authfn{1}]{Equally contributing authors.}
%

\affil[1]{Research Centre on Mathematical Modelling (ModeMat), Escuela Politécnica Nacional, Quito, Ecuador}
\affil[2]{Maxwell Institute for Mathematical Sciences, The University of Edinburgh and Heriot-Watt University, Bayes Centre, 47 Potterrow, Edinburgh,  United Kingdom}
\affil[3]{Escuela Politécnica Nacional, Department of Mathematics, Ladrón de Guevara E11-253, Quito, Ecuador}

\corraddress{Andrés Miniguano--Trujillo \\
Maxwell Institute for Mathematical Sciences, 
Edinburgh,  United Kingdom}
\corremail{Andres.Miniguano-Trujillo@ed.ac.uk}

\presentadd[\authfn{2}]{Escuela Superior Politécnica del Litoral, ESPOL, Guayaquil, Ecuador 
}

\fundinginfo{M.C. was partially supported by Escuela Politécnica Nacional's Master Program in Mathematical Optimization Scholarship.
A.M-T. acknowledges support of MAC-MIGS CDT Scholarship under EPSRC grant EP/S023291/1. 
D.R., R.T., and P.V. acknowledge the support by Escuela Politécnica Nacional Research Project \texttt{PII-DM-2019-03} and \texttt{PII-DM-2020-02}.}

\runningauthor{Mishelle Cordero et al.}

\begin{document}

\maketitle

\begin{abstract}
 In this work, a graph partitioning problem in a fixed number of connected components is
considered. Given an undirected graph with costs on the edges,  the problem \review{consists of partitioning} the set of nodes into a fixed number of subsets with minimum size, where each subset induces a connected subgraph with minimal edge cost.
\review{The problem naturally surges in applications where connectivity is essential, such as  cluster detection in social networks, political districting, sports team realignment, and energy distribution.}
Mixed Integer Programming formulations together with a variety of valid inequalities are demonstrated and computationally tested. 
An assisted column generation approach by spectral clustering is also proposed for this problem with additional valid inequalities. Finally, the methods are tested for several \review{simulated instances,} and computational results are discussed. 
\review{Overall, the proposed column generation technique enhanced by spectral clustering offers a promising approach to solve clustering and partitioning problems.}

\keywords{combinatorial optimization, graph partitioning, integer programming, mixed integer programming, column generation, spectral clustering}
\end{abstract}

\section{Introduction}
\label{intro}

Let \(G = (V,E)\) be an undirected graph, with \(V\) and \(E\) its node and edge sets, respectively. 
Let \( d: E \to \R^{+}\) be a cost function over the set of edges. Additionally, let \(k \geq 2\) and \(\alpha \geq 2\) be two integer numbers.
The aim of this work consists of finding a partition of \(V\) into \(k\) subsets, such that each subset from the partition induces a connected subgraph of \(G\) with at least \(\alpha\) nodes. Additionally,  the total cost associated \review{with} the edges with end nodes on the same subset is minimized. 


The idea for combining graph partitioning with connectivity constraints was motivated by a sub-problem that appears in \cite{Gutierrez2019} where the authors model an integrated vehicle and pollster routing problem for scheduling visits of pollsters to selected stores and routing a vehicle fleet used to transport them. Specifically, during the planning of routes for data collection, the authors solve a graph partitioning problem on a complete graph to determine the set of stores to be visited on each day. However, if the graph were non-complete, which is not a hypothesis considered by the authors in the previous application, connectivity constraints would be required in the partitioning problem in order to construct daily feasible routes. In fact, a non-complete graph is a natural choice for pedestrian and traffic networks, due to the impracticalities of connecting very distant nodes.

The graph partitioning problem has been studied since the sixties, when some seminal works can be identified.  Schurmann \cite{SCHURMANN1964} proposed a scheme for partitioning a graph in the minimum number of cycles required for estimating the number of loops passing through any program statement. Carlson and Nemhauser \cite{Nemhauser1966} partitioned a graph in at most $k$ subsets without bounds on the size of each subset using a quadratic formulation. Later, Kernighan and Lin \cite{Kernighan1970, Kernighanphdthesis} partitioned a graph into subsets of given sizes, where  they provided exact methods for problems with certain restrictions on the graph  and  a heuristic method for the \review{not-restricted} case. Moreover, Christofides and Brooker \cite{Christofides1976}  considered  the  bipartition problem minimizing the sum of the costs of the cut links, and a tree search method was used imposing an upper bound on the size of nodes. The graph partitioning problem is well-known to be {\NP}--hard and the proof of its complexity is reported in \cite{Garey1976}.

From the works cited above, it can be certainly inferred that various variants of the partitioning problem are widely reported in the literature. As such, it is important to clarify the differences among these variants. The most general problem in this context is the clustering problem which consists of finding the best partition of a set of entities into similar disjoint subsets. The clustering problem, in the most general form, does not require that the number of subsets is known \emph{a priori}; thus, a first variant of the clustering problem appears when the set of entities is bisected and the cardinalities of the  two sets obtained differ in at most one element. The bisection is naturally extended to a $k$-partitioning problem. From the latter variant, different constraints are included to define several other problems: for example, requiring a minimum cardinality on each subset,  imposing upper and lower bounds on them (size-constrained partitioning), and the same number of elements in each subset (\(k\)-way equipartition), or even different cardinalities for each subset (general partitioning problem). On the other hand, if weights are associated to each entity, then balance and capacity requirements with respect to the total weight of each subset  can be accomplished. Additionally, if a graph structure is provided over the set of entities,  a special characterization for each subset can be demanded: connectivity, clique generation, trees, etc. 

A complete survey and recent advances in this topic can be found in Buluc et al. \cite{Buluc2016}. Applications of the graph partitioning problem can be found in many areas, for instance: parallel computing \cite{HENDRICKSON_2000}, VLSI circuit design \cite{VLSI_2011}, mobile wireless communications \cite{Fairbrother_EtAl_2017}, image processing \cite{JianboShi2000}, or sports team realignment \cite{Mitchell_2003}. On the other hand, these problems have been studied with different techniques as Integer Programming, heuristic methods, semidefinite programming, or spectral methods.

Firstly, many studies using Integer Programming techniques are widely reported in the literature.
Gröstchel and Wakabayashi \cite{GROTSCHEL} studied a clustering problem that arises in qualitative data analysis. The authors identified the combinatorial problem and presented  the clique partitioning problem on a complete graph together with several facets, valid inequalities, and a Branch \& Cut scheme. Conforti, Rao, and Sassano \cite{Conforti1990a,Conforti1990b} partitioned the set of nodes into two sets such that the cardinality of these subsets differ in at most one node. The authors provided some Integer Programming formulations, {studied} the dimension of the equicut polytope, and some classes of facet-inducing inequalities were described. {In a} similar way, Rao and Chopra  \cite{Chopra1993}  described several forms of the  $k$-partitioning problem reporting several results from a polyhedral point of view. Ferreira et al. \cite{Ferreira} introduced capacity constraints on the sum of node weights on each subset of the partition and Labbé and Özsoy \cite{LABBE} reported a competitive Branch \& Cut algorithm for  the clique partitioning problem including  upper and lower bounds on the size of the cliques.  Also, a Branch \& Bound framework for solving a minimum graph bisection is presented in  \cite{Delling2014},  where lower bounds are computed heuristically. Miyauchi and  Sukegawa \cite{Miyauchi_2014} introduced a class of redundant transitivity constraints in the standard formulation for the clique partitioning problem reporting a compact formulation and a compact linear programming relaxation.  In a similar way, Koshimura et al. \cite{Koshimura_2022} introduced a series of concise Integer Linear Programming formulations that can reduce additional transitivity constraints and the authors evaluated the amount of reduction based on a simple model in which edges are chosen independently. Column generation techniques are also applied for graph partitioning. Al-Ykoob and Sherali \cite{AlYkoob2020} partitioned a complete weighted graph into cliques, where each clique has the same number of nodes, and  the total weight of the resulting subgraphs is minimized.  Moreover, Ji, and Mitchell \cite{Ji2007} considered the partitioning problem on  a complete graph,  such that  each clique has  a minimum number of nodes, and the total weight of the edges with both endpoints in the same subclique is minimized. The authors proposed a Branch \& Price method to solve the problem; they demonstrated the necessity of cutting planes for this problem and suggested effective ways of adding such planes.

Concerning a heuristic point of view, many approximation algorithms can be found in the literature.
Hendrickson and Leland \cite{Hendrickson1995} proposed CHACO, a solver that contains geometric, spectral, and multilevel heuristic algorithms for partitioning graphs.  METIS \cite{Karypis1998}  is another solver based on an efficient implementation of multilevel heuristics and some variants that are tested in different applications where graphs of large size arises: finite elements, linear programming, or transportation. Sanders and  Shulz \cite{Sanders2012} introduced a distributed evolutionary heuristic using a multilevel algorithm that combines an effective crossover with mutation operators. Jovanovic et al. \cite{Jovanovic_2016} presented an ant colony optimization algorithm combined with a local search method for solving the problem of the maximum partitioning of graphs with supply and demand. Wu et al. \cite{Wu_2019} introduced  a deterministic annealing neural network algorithm that attempts to obtain a high-quality solution by following a path of minimum points of a barrier problem. Recently, Bruglieri and Cordone \cite{Bruglieri_2021} proposed a two-level tabu search algorithm and an adaptive large neighborhood search algorithm to solve the minimum gap graph partitioning problem on instances of large-size. These methods have been tested on large-scale graphs providing good solutions in a reasonable time.
 
Semidefinite Programming is another technique that has been extensively used for solving graph partitioning problems. Alizadeh \cite{Alizadeh1995} introduced semidefinite relaxations for graph partitioning problems {together with some other} combinatorial optimization problems. Wolkowicz and Zhao \cite{Wolkowicz1999} derived a new semidefinite programming relaxation for the general graph partitioning problem obtained {through the Lagrangian dual} of a quadratic formulation of the problem. Lisser and Rendl  \cite{Lisser_2003} considered a simplified version for the telecommunication industry in which {a set of local exchange stations was partitioned} into subsets of equal cardinality using both linear and semidefinite relaxations. Hager, Phan, and Zhang \cite{Hager2011} presented an exact algorithm for graph partitioning using a Branch \& Bound method applied to a continuous quadratic programming formulation. The authors showed that the semidefinite approach generally led to much tighter lower bounds in a series of numerical experiments. Furthermore, Sotirov \cite{Sotirov2014} derived a new semidefinite programming relaxation for the general graph partitioning problem, which is based on matrix lifting {with matrix variables} having order equal to the number of vertices of the graph. A survey-type article introducing semidefinite optimization is reported in \cite{Rendl2012}.

Additionally, spectral partitioning is a family of {methods focused on finding} clusters using the eigenvectors of the Laplacian matrix derived from a set of pairwise similarities between the nodes of the graph. In the seminal paper, Fiedler \cite{Fiedler1975} showed a fundamental relation between eigenvectors of a nonnegative symmetric matrix with the degree of reducibility of some principal sub matrices, which was later applied in the theory of algebraic connectivity of undirected graphs. {A few years later}, Barnes and Hoffman \cite{Barnes1984} used linear programming in combination with graph spectral information, obtaining lower objective bounds for a graph partitioning problem. Qiu and Hancock \cite{Qiu2006} described a spectral method that can be used to decompose graphs into non-overlapping neighborhoods that can be used for the purposes of both matching and clustering. The work of Spielman and Teng \cite{Spielman2007} establishes and exploits a connection between the Fiedler value and geometric embeddings of graphs, and it provides eigenvalue bounds by proving that every planar graph has an embedding in the Euclidean space. It was also reported that spectral partitioning methods worked well on bounded-degree planar graphs and finite element meshes. In \cite{DiNardo2017}, Di Nardo et al. found the optimal layout of districts in a real water distribution system, taking into account both geometric and hydraulic features, through weighted spectral clustering methods. Finally, \cite{Meila2015} provides an overview of the main research developments in cluster analysis and spectral clustering. 

Connectivity is a common requirement in many different applications. For instance, it can be identified in cluster detection in social network analysis \cite{Moody2003}, forest harvesting \cite{Carvajal2013,Martins2005}, bioinformatics \cite{Backes2011,Huffner2014}, political districting \cite{Ricca2013}, and energy distribution \cite{Ljubi2005}.  Furthermore, graph problems imposing connectivity constraints are discussed in different works previously reported in the literature. J\"{u}nger et al. \cite{Jnger1985} considered the problem of partitioning the edge set into subsets of a prescribed size, such that the edges in each subset constitute a connected subgraph. Here, sufficient conditions for the existence of a partition  in terms of the edge connectivity of the graph are derived.  Ito et al. \cite{Ito2004} partitioned a graph into connected components by deleting edges so that the total weight of each component satisfies lower and upper bounds; the authors obtained pseudo-polynomial-time algorithms to solve the  problems for series-parallel graphs. Lari et al. \cite{Lari2015} studied the problem of partitioning into a fixed number of  connected components such that each component contains exactly one node specified previously as a center; the authors analyzed different optimization problems considering different objective functions, and they proved {\NP}--hardness for specific classes of graphs, while also providing polynomial time algorithms for additional classes. The  connected subgraph polytope of a graph and several polyhedral properties of the connectivity are studied in \cite{Wang2017}, where the authors focused on the study of  nontrivial classes of valid inequalities for the connected subgraph polytope. In recent years, Hojny et al. \cite{Hojny2020} considered the general graph partitioning problem in connected components, aiming to maximize the number of edges among components. The authors proposed  two  mixed integer linear formulations  and  new Branch \& Cut techniques considering cuts, branching rules, propagation, primal heuristics, and symmetry breaking  inequalities, and extensive numerical experiments were presented. Miyazawa et al. \cite{Miyazawa2021} proposed three mixed integer linear programming formulations for the problem of partitioning a vertex-weighted connected graph into a fixed number of  connected subgraphs with similar weights; also, polyhedral results for one formulation were included. Some results concerning the computational complexity of the partitioning problem in connected components can be found in \cite{Dyer1985}.

In this work, the graph partitioning problem in \(k\) connected components with at least \(\alpha\) nodes is addressed from an exact and a heuristic point of view. The first contribution of this paper consists of proposing two new Mixed Integer Programming (MIP) formulations for the graph partitioning problem, both considering well-known flow constraints to guarantee connectivity. The second and main contribution is the introduction of a novel Column Generation strategy which incorporates spectral clustering techniques within a third Integer Programming (IP) formulation. Several families of inequalities are proved to be valid, and they are included in the solution framework. The main drawbacks of IP techniques lie in their inability to attain an optimal solution for large-sized instances. In this regard, the effectiveness and scope of both exact and heuristic methods are demostrated with several simulated instances.

The paper is organized as follows. In Section \ref{modelo}, two MIP and one IP formulations are described. For the Column Generation approach, a heuristic to solve the associated {\NP}--hard pricing problem is proposed. Section \ref{sec:Valid_ineqs} introduces and proves several families of valid inequalities for the associated polyhedra of each formulation. Computational experiments are discussed in Section \ref{resultados}, and conclusions are given in Section \ref{conclusiones}.


\section{Notation and Formulations}
\label{modelo}

Let \(G = (V,E)\) be an undirected graph, where \(V = \{1,\ldots, n\}\) represents its node set and \(E \subset \big\{ \{i,j\}:\, i,j\in V, i \neq j \big\} \) its edge set. The set of neighbors of a set of nodes $C$ is defined by  \(N(C)=\{v \in V\setminus C: \{w,v\}\in E, w\in C\}\).  Let $\delta(S)$ be the set of edges in the cut associated with a set of nodes $S$.  Moreover, let \( d: E \to \R^{+}\) be a cost function, and let \(k,\alpha \geq 2\) be two integer numbers. Here, \(k\) denotes the fixed number of connected components to partition \(G\) into, and \(\alpha\) represents the minimum cardinality of each connected component. A \(k\)-partition of \(V\) is a set \(\{V_1,V_2,\ldots,V_k\}\) where \(V_i\cap V_j=\emptyset\) for all \(i\neq j\), \(\bigcup\limits_{c=1}^k V_c=V\), and \(V_c \neq \emptyset\) for all \( c \in [k]\), where $[k]$ denotes the set $\{1,\ldots,k\}$. { Additionally,  \( E(V_c) \) represents the subset of edges with both end nodes in \(V_c\), for any \(c \in [k] \) and \( \bar{E}=\left\{\{i,j\} \not\in E : \, i,j \in V, i \neq j  \right\} \) . }

The partitioning problem in connected components with minimum size constraints consists of finding a  $k$-partition $\{V_1,V_2,\ldots,V_k\}$ such that each {subset $V_c$ induces a connected subgraph \((V_c, E(V_c))\),  with \(|V_c| \geq \alpha\), and the total cost of edges in \(\bigcup\limits_{c\in[k]} E(V_c)\) is minimized. A challenging task of this problem lies in capturing the connectivity requirement for the graph partitioning problem. 
Such requirement limits the applicability of traditional modelling tools and classical spectral clustering techniques. To address this problem, this paper proposes the novel use of flow constraints under the scope of mixed integer programming, which can effectively evaluate the connectivity of a cluster, and a clever modification of the spectral clustering algorithm within a column generation strategy.
}

Observe that each connected component of \(G\) will have at most \(\beta := n-(k-1)\alpha\) nodes. This comes as in the worst case, if \(k-1\) connected components have the minimum cardinality \(\alpha\), then the remaining component must contain \( \beta\) nodes. Moreover,  \(\alpha\) has to be less than or equal to \( \left\lfloor \nicefrac{n}{k} \right\rfloor\), otherwise the problem is infeasible.

\subsection{Formulations via Mixed Integer Programming }

Two MIP models, labeled {\PrimerModelo} and {\SegundoModelo}, are proposed in this subsection. The formulation of these two MIPs uses ideas also explored in Hojny et al. \cite{Hojny2020} and Miyazawa et al. \citep{Miyazawa2021}, where connectivity is modeled using flow-based constraints.
\review{In \cite{Hojny2020} and \citep{Miyazawa2021}, the proposed models maximize the total weight of edges between partitions, which is equivalent to the objectives proposed in this section. 
The addition of the minimum cardinality constraint is also a particular requirement in this work that is not present in the aforementioned studies.}

The model {\PrimerModelo} considers four sets of binary and continuous variables. Let $y_{i}^c$ be a binary variable taking the value of one if the node $i\in V$ belongs to $V_c$, for some $c \in [k]$, and zero otherwise. The edge-associated variable \(x_{ij}^c\) takes the value of one if the edge \( \{i,j\} \in E\) links nodes \(i\) and \(j\) in the subset \(V_c\), and zero otherwise. A binary variable { \( \bar{x}_{ij}^c \) is defined for each \( \{i,j\}  \in \bar{E}\) } taking the value of one if nodes \(i\) and \(j\) belong to the same cluster \(c\), and zero otherwise. Finally, let \(f_{ij}^{lc}\) be a continuous variable representing the flow over the arc \( (i,j) \) with a source at the node \(l\) on the induced subgraph \( (V_c, E(V_c)) \). Here each edge \(\{i,j\}\) has two associated antiparallel arcs, namely \((i,j)\) and \((j,i)\). Sets  $\delta^-_{j}$ and $\delta^+_{j}$ contain incoming and outgoing arcs to the node $j$, respectively. Thus, the following formulation, denoted as \PrimerModelo, is proposed:
\begin{subequations}
\begin{alignat}{4}
& \min \sum_{c\in [k]} \sum_{\{i,j\}\in E} d_{ij} x_{ij}^c    \label{objective_1.1} \\
\intertext{subject to}
& \sum_{c\in[k]}y_i^c=1 					&&\forall i\in V,  \label{restr_1.2}\\
& y_i^c + y_j^c - x_{ij}^c \leq 1				&&\forall \{i,j\} \in E,  c\in[k], \label{restr_1.3}\\
& y_i^c + y_j^c - 2x_{ij}^c \geq 0			&&\forall \{i,j\} \in E,  c\in[k], \label{restr_1.4}\\
& y_i^c + y_j^c - \bar{x}_{ij}^c \leq 1			&&{\forall \{i,j\} \in \bar{E}},  c\in[k], \label{restr_1.5}\\
& y_i^c + y_j^c - 2 \bar{x}_{ij}^c \geq 0		&&{\forall \{i,j\} \in \bar{E}},  c\in[k], \label{restr_1.6}\\
& \sum_{l\in V} \big( f_{ij}^{lc}+f_{ji}^{lc} \big) \leq (2nk) \, x_{ij}^c 	&&\forall \{i,j\} \in E, c\in[k], \label{restr_1.7}\\
& \sum_{\mathclap{ (i,j) \in \delta^-_{j} } } \, f_{ij}^{lc} \, - \,  \sum_{\mathclap{ ( j,i)\in \delta^+_{j} }} \, f_{ji}^{lc} =
\begin{cases}
	\,\,\, \displaystyle\sum_{\mathclap{ \{j,i\}\not \in E }} - \bar{x}_{ji}^c    &\hspace{-0.25em}\text{ if } j=l, \\
  \,\, 0 												&\hspace{-0.25em}\text{ if }  \{l,j\} \in E, \\
\, \bar{x}_{lj}^c  										&\hspace{-0.25em}\text{ otherwise}, 
\end{cases}
									\hspace{2cm} &&\forall (l,j)\in V\times V, c\in[\p] ,   \label{restr_1.8}		\\
& \sum_{i\in V} y_i^c\geq \alpha  			&&\forall c\in[\p] \label{restr_1.9}				\\
& y_{i}^c \in \{0,1\}  					&&\forall i \in V, c\in [\p], \label{variables_y}			\\
&  x_{ij}^c \in \{0,1\}  					&&\forall \{i,j\} \in E, c \in [k] , \label{variables_x}		\\
&  \bar{x}_{ij}^c \in \{0,1\}  				&&{\forall \{i,j\} \in \bar{E}}, c \in [k] , \label{variables_f}	\\
& f_{ij}^{lc} \in \R_{\geq 0}				&&\forall \{i,j\}  \in E, l\in V, c \in [k] .\label{variables_z}
\end{alignat}
\end{subequations}
The objective function \eqref{objective_1.1} seeks to minimize  the total edge cost of the connected subgraphs $(V_c, E(V_c))$, for all $c\in[k]$. Constraints \eqref{restr_1.2} indicate that each node must belong exactly to one cluster. Constraints \eqref{restr_1.3} and \eqref{restr_1.4} establish that if two nodes $i,j\in V$ are assigned to $V_c$, then the edge $\{i,j\}\in E$ belongs to the induced subgraph $(V_c,E(V_c))$. Similarly, constraints \eqref{restr_1.5} and \eqref{restr_1.6} impose a connectivity relation between non-adjacent nodes within a connected component. Constraints \eqref{restr_1.7} are coupling constraints defining a capacity for the antiparallel arcs associated with edge \(\{i,j\}\in E\). Flow conservation is ensured by constraints \eqref{restr_1.8} for all the nodes within the same connected component. It imposes that one unit of flow is sent through a path linking two nodes $l,j\in V$ in the same connected component whenever $\{l,j\} \in \bar{E}$. 

Let us expand on this explanation by considering each node \(j \in V_c\). Here, \(j\) becomes a source with { \( \sum_{ { \{j,i\} \in \bar{E} }} - \bar{x}_{ji}^c\)} units of flow, which represent the number of nodes in the connected component \(V_c\) that do not share an edge with \(j\). If any other node \( l\in V_c\) is considered such that \(\{l,j\}\) is an edge in \(G\), then the two nodes are connected and no flow is lost. If instead { \( \{l,j\} \in \bar{E}\)} , then a unit of flow is used to signal that a path exists in {\(E(V_c)\)} between \(l\) and \(j\). Overall, these three conditions ensure that each pair of nodes under the same component \(c\) is connected, see also Figure \ref{fig:M1}.

Finally, \eqref{restr_1.9} specify lower bounds for the number of nodes present at each connected component.

\begin{figure}[htbp]
\begin{center}
	\includegraphics[scale=0.8, page=3]{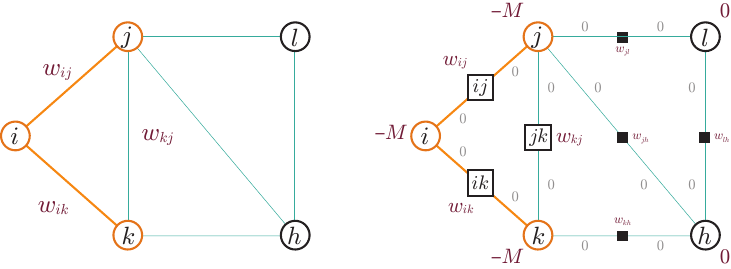}
\caption{Example of a feasible solution for a connected component with \(6\) nodes. Here \(j\) acts as a source from which \(4\) units of flow are distributed to the sink nodes \( i_2\), \(i_3\), \(i_4\), and \(i_5\). There is a direct edge from \(j\) to \(i_1\), hence \(i_1\) is not a sink. The number over each arc represents the flow passing through it. The generated subgraph contains also the edges \( \{i_2,i_3\}\), \(\{i_2,i_5\}\), and \(\{i_4,i_5\}\), however, these do not transfer any flow.}
\label{fig:M1}
\end{center}
\vspace{-1\baselineskip}
\end{figure}

A second formulation is presented using the $k$-augmenting graph of $G$, which is the pair $G^k=(V^k, E^k)$, where $V^k=V\cup \mathbb{A}$, $\mathbb{A}=\{n+1,n+2, \ldots, n+k\}$ is a set of artificial nodes (one for each connected component in the partition), $E^k=E \cup \big\{ \{i,j\}: i \in V, j \in \mathbb{A} \big\}$. For each edge $\{i,j\}\in E$, the original cost $d_{ij}$ is retained, while the edges in $E^k \setminus E$ have a cost set to $0$. This modelling perspective allows a new formulation using three sets of variables. Let \(y_i^c\) be the same as for model \PrimerModelo. In contrast, let \( x_{ij}\) be a binary variable associated with every edge \(\{i,j\} \in E^k\). If $x_{ij}=1$ for \(\{i,j\} \in E\), then edge \(\{i,j\}\) belongs to any connected component. Additionally, let \( f_{ij}\) be a continuous variable representing the flow over the arc \((i,j)\) associated with the edge \(\{i,j\} \in E^k\).  Notice again that each edge has two associated antiparallel arcs.
The second formulation, labelled {\SegundoModelo}, can be stated as follows: 
\begin{subequations}
\begin{alignat}{4}
& \min  \sum_{\{i,j\}\in E} d_{ij} x_{ij}   \label{objective_2.1} \\
\intertext{subject to}
& \sum_{c\in[k]}y_i^c = 1 					&&\forall i\in V,  			\label{restr_2.2}\\
& y_i^c+y_j^c - x_{ij} \leq 1				&&\forall \{i,j\} \in E,  c\in[k], \label{restr_2.3}\\
& y_i^c+y_j^{\ell} + x_{ij} \leq 2			\hspace{3cm}&&\forall \{i,j\} \in E,  c,\ell \in[k], c\neq \ell, 
														\label{restr_2.4}\\
& \sum_{j\in V} x_{ij} = 1					&&\forall  i \in \mathbb{A}, 	\label{restr_2.5}\\
& \sum_{j\in V} f_{ij}=\sum_{j\in V} y_{j}^{i-n}	&&\forall  i \in \mathbb{A}, 	\label{restr_2.6}\\
& \sum_{ \mathclap{(i,j) \in \delta^-_{j} } } f_{ij} - \sum_{\mathclap{ (j,i) \in \delta^+_{j} } } f_{ji}  = 1
									&& \forall j \in V, 			\label{restr_2.7}\\
& f_{ij} +  f_{ji} \leq  \beta x_{ij}				&& \forall \{i,j\} \in E, 		\label{restr_2.8}\\
& \alpha x_{ij}\leq f_{ij} \leq \beta x_{ij} 		&&\forall  i\in \mathbb{A}, j\in V, \label{restr_2.9}\\
& y_{i}^c \in \{0,1\} 						&&\forall i \in V, c\in [\p], 	\label{variables_2.10} \\
&  x_{ij} \in \{0,1\}  						&&\forall \{i,j\} \in E^k,	\label{variables_2.11} \\
&  f_{ij} \in \R_{\geq 0}  					&&\forall \{i,j\}  \in E^k.	\label{variables_2.12}	
\end{alignat}
\end{subequations}
The objective function \eqref{objective_2.1} seeks to minimize  the total edge cost of the connected subgraphs $(V_c, E(V_c))$, for all $c\in[k]$. Constraints \eqref{restr_2.2} indicate that each node must belong exactly to one connected component. Constraints \eqref{restr_2.3}  establish that if two nodes $i,j\in V$ are assigned to $V_c$, then the edge $\{i,j\}\in E$ belongs  to the induced subgraph $(V_c,E(V_c))$, and constraints \eqref{restr_2.4} ensure that the edges with end nodes in different connected components must be equal to zero.  Constraints \eqref{restr_2.5}--\eqref{restr_2.7} impose flow condition to ensure connectivity in the component. Particularly, \eqref{restr_2.6} and \eqref{restr_2.7} are flow conservation constraints.
To see this in more detail, consider the node \(n+c \in \mathbb{A}\) which corresponds to the connected component \(c \in [k]\). Then \eqref{restr_2.6} collects all the nodes in \(c\) and determines this value as the flow that is distributed from the source \(n+c\). 
This is complemented with \eqref{restr_2.7}, where each node \(j \in V_c\) becomes a sink, and \eqref{restr_2.5} that ensures the connectivity of the component by guaranteeing at least one path between each pair of nodes in the connected component. An example is presented in Figure \ref{fig:M2}.

Finally, \eqref{restr_2.8} are coupling constraints imposing capacities for the antiparallel arcs associated with edge \(\{i,j\}\in E\), while \eqref{restr_2.9} define upper and lower bounds on the number of nodes at each connected component. 

\begin{figure}[htbp]
\begin{center}
	\includegraphics[scale=0.8, page=2]{Figures.pdf}
\caption{Example of a feasible solution for the \(c\)-th connected component with \(5\) nodes. Here the artificial node \(n+c\) acts a source from which \(5\) units of flow are distributed to the sink nodes \( (i_s)_{s \in [5]} \). Flow transfer from \(n+c\) to the nodes in \(V_c\) occurs only once through the arc \( (n+c, i_1)\). The generated subgraph contains the edges \( \{i_2,i_3\}\), \(\{i_2,i_5\}\), and \(\{i_4,i_5\}\), however, these do not transfer any flow.}
\label{fig:M2}
\end{center}
\vspace{-1\baselineskip}
\end{figure}

\subsection{Assisted Column Generation by Spectral Clustering}

In this subsection, the graph partitioning problem in connected components with minimum size constraints  is  modeled as a set partitioning problem, where each node must be assigned to exactly one connected component. Thus, the problem can be modeled as follows: given an undirected graph $G=(V,E)$ with a  cost function $d:E \to \R^+$, and the finite family $\mathcal{F}$ of all induced connected subgraphs of $G$ with number of nodes between \(\alpha\) and \(\beta\), the task consists of finding a subfamily $F\subseteq \mathcal{F}$, with cardinality \(|F|=k\), of pairwise disjoint subgraphs such that
\(
	\bigcup_{f \in F} f = V,
\)
and the total cost is minimized. This formulation resembles the approach taken in Borndörfer et al. \cite{Borndrfer2022} where a different objective is optimized.
 
For this Column Generation approach, the following parameters and variables are needed. Define $a_{if} \in \{0,1\}$ taking the value of one if the connected component $f\in \mathcal{F}$ contains the node $i\in V$, and zero otherwise. Furthermore, let $c_f \geq 0$ be the cost of $f\in \mathcal{F}$, which is equal to the total cost of the edges in the connected component. Now, let \(x_f\) be the decision variable taking the value of one if the connected component \(f\) belongs to a solution, and zero otherwise. Thus, the graph partitioning problem in connected components with minimum size constraints can be formulated as follows {\TercerModelo}:
\begin{subequations}
\begin{align}
	&	\min \sum_{ f\in \mathcal{F}} c_f x_f			\label{obj_2}	\\
\intertext{subject to}
	&  \sum_{f\in \mathcal{F}} a_{if} x_f = 1		& \forall i\in V,\label{restr2_1}\\
	& \sum_{f\in \mathcal{F}} x_f = k,				\label{restr2_2} \\
	& x_f\in \{0,1\}							& \forall f\in \mathcal{F}. \label{rest:3-d}
\end{align}
\end{subequations}
The objective function \eqref{obj_2} seeks to minimize  the total edge cost of the partition. The equality constraints \eqref{restr2_1} indicate that every node must belong to exactly one of the chosen connected components, and constraint \eqref{restr2_2} imposes that exactly $k$ connected components are selected.

The linear relaxation of {\TercerModelo} is denoted by {\TercerModeloRelajado}. Observe that, since only nonnegative edge costs are considered, the upper bound \(x_f \leq 1\) given by the linear relaxation is automatically satisfied.

As the set \(\mathcal{F}\) has an exponential number of elements, a Column Generation approach to solve {\TercerModelo} is justified. In this setting, the model starts with a small subset of connected components $R \subset \mathcal{F}$, and additional components are included throughout an iterative scheme. The relaxed model {\TercerModeloRelajado} starting with the subset of feasible connected components $R$ has an associated dual as follows:
\begin{align*}
	\max \sum_{ i\in V} \pi_i +\p \gamma
	\qquad\text{subject to}\qquad
	\sum_{i\in V} a_{if}\pi_i +\gamma \leq c_f 	\qquad \forall f\in R.
\end{align*}
\noindent where the dual values $\pi_i$ and $\gamma$  are used to decide if the set $R$ must  be expanded or not. 
This yields a Column Generation algorithm, where a connected component \(f\) is added to \(R\) if it has an associated negative reduced cost, i.e., $c_f -  \gamma - \sum_{i\in V} a_{if}\pi_i <0$. Here, each \(f\) is found as a solution of a pricing problem, presented below. The algorithm stops when no additional negative reduced cost connected components are found.
Notice that the Column Generation technique is conventionally applied to the relaxation of the master problem, which becomes a linear program. In this setting, an optimal solution of the relaxation is found as the algorithm stops after no more columns can be added. Hence, the technique can be regarded as a heuristic algorithm for the integral model and does not guarantee an optimal solution. In practice, though, as it will be showcased in the numerical experiments, applying such algorithm does find good quality solutions in competitive times.

The pricing problem can be stated as a minimum node-edge-weighted connected component problem with a minimum size constraint: given an undirected graph $G=(V,E)$ with node weights $-\pi_i$, edge costs $d_{ij}$, and a constant $-\gamma$, find a connected component  whose node set cardinality is bounded between $\alpha$ and $\beta$ and whose total weight is minimized.  
This problem can be formulated as a simplified version of {\PrimerModelo} or {\SegundoModelo}. The simplest approach is given as follows:
\begin{subequations}
\begin{alignat}{3}
	&\min  \sum_{\{i,j\}\in E} d_{ij} x_{ij} - \sum_{i\in V} \pi_{i}y_{i}  - \gamma  
	\label{obj:pricing}
	\intertext{subject to \vspace{-0.5\baselineskip} }
	& y_i+y_j-x_{ij} \leq 1								&&\forall \{i,j\} \in E, \label{restr_p21}	\\
	& \sum_{j\in V} x_{n+1, j} =1,							\label{restr_p22}	\\
	& \sum_{j\in V} f_{n+1, j} = \sum_{j\in V} y_{j}, 				\label{restr_p23}	\\
	& \sum_{ \mathclap{(i,j) \in \delta^-_{j} } } f_{ij} - \sum_{\mathclap{ (j,i) \in \delta^+_{j} } } f_{ji}  = y_{j} 		
											&& \forall j \in V, \label{restr_p23}		\\
	& f_{ij}+f_{ji} \leq \beta x_{ij} 					&& \forall \{i,j\} \in E, \label{restr_p24}	\\
	& \alpha x_{n+1,j} \leq f_{n+1, j} \leq \beta x_{n+1,j} 			\hspace{3cm}
	 										&&\forall j \in V, \label{restr_p25}		\\
	& y_{i} \in \{0,1\}  								&&\forall i \in V, \label{variables_2py}	\\
	&  x_{ij} \in \{0,1\}  								
		&&\forall \{i,j\} \in E \cup \big\{ \{n+1,i\}:\, i\in V \big\}, \label{variables_2px}	\\
	&  f_{ij} \in \R_{\geq 0}  								
		&&\forall \{i,j\}  \in E \cup \big\{ \{n+1,i\}:\, i\in V \big\}. 
		\label{variables_2pf}
\end{alignat}
\end{subequations}
Here, the families of variables \(x\) and \(f\) have the same meaning as in {\SegundoModelo}. However, the binary variables \(y \in \{0,1\}^V\) identify the nodes of only one connected component (in contrast with models {\PrimerModelo} and {\SegundoModelo}). Constraints \eqref{restr_p21}--\eqref{restr_p24} function exactly as \eqref{restr_2.2}--\eqref{restr_2.9} in {\SegundoModelo} for the specialized case \(k = 1\) and \( \mathbb{A} = \{n+1\}\). The only particular difference occurs in constraints \eqref{restr_p23} which enforce that flow can only go through the connected component labeled by \(y\). In a similar fashion, the objective function \eqref{obj:pricing} is just the reduced cost associated with adding the connected component 
to the master problem {\TercerModelo}.

The computational complexity of the pricing problem is proved in the following theorem.

\begin{theorem}
\label{theo:01}
	The pricing problem is {\NP}--hard.
\end{theorem}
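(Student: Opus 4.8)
The plan is to establish {\NP}--hardness by a polynomial reduction from the classical edge-weighted \textsc{Steiner Tree} problem in graphs, which is {\NP}--complete. After dropping the irrelevant additive constant $-\gamma$, the pricing problem asks for a connected vertex set $S$ with $\alpha \le |S| \le \beta$ that minimizes $\sum_{\{i,j\}\in E(S)} d_{ij} + \sum_{i\in S} w_i$, where $w_i := -\pi_i$ are arbitrary node weights and $E(S)$ denotes the edges induced by $S$. I would reduce the decision version of \textsc{Steiner Tree} (given $H=(V_H,E_H)$, nonnegative edge costs $d_e$, terminals $T\subseteq V_H$, and a bound $C$, does $H$ contain a connected subgraph spanning $T$ of total edge cost at most $C$?) to the decision version of the pricing problem.

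First I would build the subdivision graph $G'=(V',E')$ of $H$: keep every original node, and for each edge $e=\{u,v\}\in E_H$ introduce a new node $m_e$ adjacent to both $u$ and $v$, discarding the original edge. This renders $G'$ bipartite and lets me push all cost onto the nodes. Concretely, set every edge cost in $G'$ to $0$; assign weight $w_{m_e} := d_e$ to each edge-node, weight $0$ to each non-terminal original node, and weight $-M$ to each terminal, where $M > \sum_{e\in E_H} d_e$. Finally set $\alpha := 2$ and $\beta := |V'|$ so that the cardinality constraint is non-binding, and fix the target value $W := C - M|T|$.

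The correctness argument rests on the following correspondence: a connected subgraph of $G'$ whose edge-nodes are $\{m_e : e\in P\}$ corresponds exactly to the subgraph of $H$ with edge set $P$ and identical cost $\sum_{e\in P} d_e$, with the weight-$0$ original nodes acting as free Steiner points. Since $M$ exceeds the total edge cost, any optimal pricing solution must include all $|T|$ terminals (omitting one forfeits $M$, which the remaining edge-nodes can never recoup), and connectivity of $S$ then forces it to contain a connected edge set of $H$ spanning $T$; conversely it never keeps a superfluous edge-node because $d_e\ge 0$. Hence the optimal pricing value equals $-M|T| + \mathrm{OPT}_{\mathrm{Steiner}}$, so a feasible pricing solution of weight at most $W$ exists if and only if $H$ admits a Steiner tree of cost at most $C$. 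The construction is clearly polynomial, which yields the claim.

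I expect the main obstacle to be that the objective charges the \emph{entire} induced edge set $E(S)$ rather than a spanning tree: a naive reduction carried out on $H$ itself would overcount edges closing cycles among the chosen vertices and thereby distort the Steiner cost. The subdivision device, combined with placing all cost on nodes and setting edge costs to $0$, is precisely what neutralizes this difficulty, since then no induced edge ever contributes to the objective. The only remaining care is choosing $M$ large enough to force the terminals and noting that loosening the size bounds is legitimate, as {\NP}--hardness of a special case suffices.
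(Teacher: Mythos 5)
Your proposal is correct and follows essentially the same route as the paper's proof: a polynomial reduction from \textsc{Steiner Tree} using the subdivision graph, zero edge costs, edge costs pushed onto the subdivision nodes, weight $-M$ on terminals with $M$ exceeding the total edge cost, and weight $0$ on potential Steiner nodes. If anything, your write-up is more complete than the paper's, since you spell out both directions of the decision-version equivalence, the choice $\alpha=2$, $\beta=|V'|$, and the reason the subdivision is needed (to prevent induced cycle-closing edges from distorting the cost), all of which the paper leaves implicit.
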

\begin{proof}
	This result follows using a polynomial reduction from the Steiner Tree Problem which is well-known to be {\NP}--hard. Let $G=(V,E)$ be a graph with nonnegative edge weights $w_{ij}$ for each edge $\{i,j\}\in E$, and let \(S\) be a subset of $V$. The Steiner Tree Problem \review{consists of} finding a tree in $G$ whose vertices contain $S$ such that the total weight of the tree is minimum. An instance of the pricing problem is constructed as follows: let $\tilde{G}=(\tilde{V}, \tilde{E})$ be an undirected graph such that $\tilde{V}$ contains the node set $V$ alongside one additional node $ij$ for each edge $\{i,j\}$ in $E$.  The set $\tilde{E}$ contains edges of the form $\{i,ij\}$ and $\{ij,j\}$ for any edge $\{i,j\}\in E$. The node weights in $\tilde{G}$ are defined in the following fashion: $ \pi_i = -M $, for each $i\in S$, with $M$ a constant value greater than the total sum of the edge weights of $G$,  $\pi_i=0$  for every $i\in V\setminus S$, and $\pi_{ij}=w_{ij}$ for each $ij\in \tilde{V}\setminus V$.  Moreover, edge weights have values equal to zero for any edge in $\tilde{E}$, and $\gamma$ is fixed to zero. Finally, by construction, every feasible solution $T= \big( V(T), E(T) \big)$ of the Steiner Tree Problem with cost $C(T)=\sum_{\{i,j\}\in E(T)}w_{ij}$ corresponds to a feasible solution of the pricing problem with cost $-M|S|+C(T) +\gamma$   as depicted in Figure \ref{fig:01}.
	\qed
\end{proof}

\begin{figure}[htbp]
\vspace{-0.5\baselineskip}
\begin{center}
	\includegraphics[scale=0.8,page=1]{Figures.pdf}
\caption{Polynomial reduction of the pricing problem to the Steiner Tree Problem. On the left, an instance of the Steiner Tree Problem is presented with \(S=\{i,j,k\}\). On the right, the resulting instance of the pricing problem is presented.}
\label{fig:01}
\end{center}
\vspace{-1\baselineskip}
\end{figure}

A drawback of Theorem {\ref{theo:01}} is that finding the optimal solution of the pricing problem is computationally expensive. Hence, heuristic methods to find columns with negative reduced cost are considered. 
In what follows, a heuristic for finding feasible \(k\) partitions assisted by a spectral clustering algorithm is presented.

Algorithm \ref{alg_Pricing1}  describes a methodology to extend the set $R$. This algorithm takes as input an instance of the Pricing Problem and returns a pool $P$ of connected components with negative reduced cost.  For every node $i$, the algorithm builds several connected components containing such a node. Each one of these, denoted by $C$, is constructed by adding iteratively nodes $\hat{s}$ from $N(C)$ such that the connected component induced by $C\cup \{\hat{s}\}$ has minimum negative reduced cost. If  $|C| \geq \alpha$  and the reduced cost of the generated connected component is negative, then this component is added to the pool $P$.

\begin{algorithm}[!htb]
	\begin{algorithmic}[1]
		\STATE \textbf{Input:} Undirected graph \(G = (V,E)\), shadow price vector \(\pi\), shadow price of column \(\gamma\), size constraints of the connected component \((\alpha,\beta)\), a threshold integer \(\delta \in \mathbb{N}\), {and \( d: E \to \R^{+}\)  a cost function.}
		\STATE Initialize $P = \emptyset$.
		\FOR{ $i \in V$}
		\STATE \(C = \{i\}\).
			\WHILE{$|C| \leq \beta $}
				\STATE Choose node
				\( \displaystyle
					\hat{s} = \argmin_{ s\in N(C) }\bigg\{ \hspace{1.5em}  \sum_{ \mathclap{e \in E[ C \cup \{s\} ] } } d_{e} \hspace{1em} - \quad \sum_{ \mathclap{ i \in C \cup \{s\} } } \pi_i  \bigg\}
				\)
				\STATE $C = C \cup \{\hat{s}\}$
				\STATE 
				\( \displaystyle
					r = \sum_{ \mathclap{e \in E[ C] } } d_{e} - \gamma - \sum_{ \mathclap{ i \in C } } \pi_i
				\)
				\IF{$|C| \geq \alpha$ and \(r < 0 \)}
					\STATE \(P = P \cup \{C\}\)
				\ENDIF
			\ENDWHILE
		\ENDFOR
		\RETURN $\delta$ columns from the column pool $P$ with least reduced cost value.
	\end{algorithmic}
	\caption{Heuristic Pricing Algorithm}
	\label{alg_Pricing1}
\end{algorithm}


Observe that Algorithm \ref{alg_Pricing1} only finds a pool of connected components \(P\) at a time, i.e., columns in \( P\) might not generate a partition of \(G\) into \(k\) connected components until later iterations of the algorithm add enough columns to \(R\).
As a result, the column generation algorithm might have a damped performance, as a full partition \( \{C, V_2, \ldots, V_k\} \) is needed to have a feasible set of connected components. A spectral-clustering algorithm is then presented to find the \emph{complementary partition}, \( \hat V = \{V_2, \ldots, V_k\}\), such that \( \{C\} \cup \hat V\) is a partition of \(V\). {Spectral clustering encompasses a range of techniques aimed at identifying $k$ clusters by using the eigenvectors of a matrix. Usually, this matrix is constructed based on a set of pairwise similarities \( S_{ij} \) among the data points slated for clustering. This task is often referred to as similarity-based clustering, graph clustering, or the clustering of dyadic data. For this purpose, Algorithm \ref{alg_SpectralPartitioning} is presented, which allows to cluster any graph in connected components with a given size.  
\review{Spectral clustering is a remarkably robust and efficient linkage method in statistical analysis, which motivates its integration into combinatorial optimization tasks involving graph partitioning.}
For an extensive explanation of Algorithm \ref{alg_SpectralPartitioning} \review{and its statistical connections}, see \cite{Meila2015}. }
\begin{algorithm}[!htb]
	\begin{algorithmic}[1]
		\STATE \textbf{Input:} Undirected graph \(G = (V,E)\), cost function \(d: E \to \R^+\), and number of clusters \(k\).
		\STATE {Construct a similarity matrix \( S \coloneqq (s_{ij}) \) where \(s_{ij} = d_{ij}\) whenever \( ij \in E\) and \(0\) otherwise.}
	
		\STATE Compute the vector of node degrees \( \hat{d}_i \coloneqq \sum_{j\in V}{d_{ij}}\).
		\STATE Form the transition matrix \( T \) such that \( T_{ij} \coloneqq d_{ij}/\hat{d}_i\).
		\STATE Compute \(k\) largest eigenvectors \(v_{1}, \ldots, v_k\) of \(T\).
		\STATE Embed each node of \(G\) in the \(k-1\) principal subspace; i.e., associate the vector \(x_i \coloneqq [ v_{i,2} \, \ldots \, v_{i,k} ]\) for each node \(i\in V\).
		\STATE Run the constrained \(k\)--means algorithm on the data set \(\{x_i\}_{i \in V}\) to obtain $\hat V$.
		\RETURN $\hat V$.
	\end{algorithmic}
	\caption{Spectral clustering with size constraints}
	\label{alg_SpectralPartitioning}
\end{algorithm}

Particularly, the constrained \(k\)--means clustering problem \cite{bennett2000} arising in Step 7 of Algorithm \ref{alg_SpectralPartitioning} can be solved efficiently  using any linear programming solver. 
The spectral decomposition {in Step 5} is the most expensive operation in Algorithm \ref{alg_SpectralPartitioning}. To alleviate the computational cost of obtaining the spectral information, the Implicit Restarted Arnoldi method can be used in \(\mathcal{O}(nk^2)\) time. 
As a result, Algorithms \ref{alg_Pricing1} and \ref{alg_SpectralPartitioning} can be combined to find a spectral partitioning of \(\hat G = G[V\setminus C]\) that complements any connected component \(C\) in \(G\).

\begin{algorithm}[!htb]
	\begin{algorithmic}[1]
		\STATE \textbf{Input:} Undirected connected graph \(G = (V,E)\), a fixed connected component \(C\), and number of clusters \(k\).
		
		\STATE Compute \(\hat k\), the number of connected components in the induced subgraph \( \hat G \coloneqq G[ V\setminus C ]\). Let \( \{V_i\}_{i \in [\hat{k}]} \) be the family of connected components in \( \hat G\).
	
		\IF{\(\hat{k} > k-1\)}
		\RETURN \(\{\emptyset, C\}\)
        \ENDIF
  
        \IF{\(\hat{k} =k-1\) and \(|V_i|\geq \alpha,~ \forall i \in [\hat k] \)}
		\RETURN \(\{V_1,\dots,V_{k-1}, C\}\)
        \ELSE 
        \RETURN \(\{\emptyset, C\}\)
        \ENDIF

        \IF{\(\hat{k}<k-1\) and \(|V_i|\geq \alpha,~ \forall i \in [\hat k] \)}
        
		\STATE For all \(i \in [\hat k]\), compute \(\kappa_i \coloneqq \left\lfloor \frac{|V_i|}{\alpha} \right\rfloor \), this is the maximum feasible number of subpartitions in which \( V_i \) can be divided. 
				
		\IF{ \(\sum_{i \in [\hat k]}  \kappa_i \geq k-1\) }
		
			\STATE \label{Step12} Obtain a nonempty complementary partition \( \hat V  \) using Algorithm \ref{alg_SpectralPartitioning}~ for each one of the \(\hat k\) connected components. \label{step10}
			\RETURN $\hat V \cup \{C\}$
		
		\ELSE
			\RETURN \(\{\emptyset, C\}\)
			
		\ENDIF
	\ELSE 
        \RETURN \(\{\emptyset, C\}\)
		
		\ENDIF
	\end{algorithmic}
	\caption{\(k\)-feasible partitioning of $G=(V,E)$ from a connected component \(C\)}
	\label{alg_PartitionG}
\end{algorithm}

Algorithm \ref{alg_PartitionG} computes, based on a fixed component \(C\), a feasible $k$ partition from graph $G$. The number \(\hat{k}\) represents the number of connected components in \(\hat G = G[V\setminus C]\) which can be computed using any graph exploration algorithm. Notice that not all possible values of \(\hat k\) give place to a feasible \(k\) partition of the original graph \(G\). As a result, Algorithm \ref{alg_PartitionG} analyses the feasibility of \(\hat k\) and partitionability of the subfamily of connected components \( \{V_i\}_{i \in [\hat{k}]} \) in \(\hat G\). In case that \(\hat G\) cannot be partitioned into \(k-1\) connected components, then Algorithm \ref{alg_PartitionG} returns the pair \( \{\emptyset, C\} \), which certifies that \(C\) cannot belong to any feasible solution of the partitioning problem in connected components with minimum size constraints. Furthermore, in Step \ref{step10} and for very \( i \in [\hat k]\), the induced subgraph \( G[V_i]\), its corresponding similarity matrix, and a cluster size \( p_i \) are given as input for  Algorithm \ref{alg_SpectralPartitioning}. The cluster size \(p_i\) is an integer between \(1\) and \(\kappa_i\) that is computed using a greedy algorithm such that \( \sum_{i\in [\hat k]} p_i = k-1\) and the quotient \( \sum_{e \in E[V_i]} d_e \big / p_i \) is as small as possible. The first condition ensures that the algorithm will return a feasible \(k\) partition, while the second condition reduces the total cost of the subpartitions.

An alternative approach to obtain a nonempty complementary partition \(\hat{V}\) given a connected component \(C\) is based on Constrained Clustering by Spectral Kernel Learning \cite{Li2009}. The main idea of the method is to design a kernel to respect both the proximity structure of the data and the given pairwise constraints. The authors propose a spectral kernel learning framework and formulate it as a convex quadratic program, which can be solved efficiently to optimality. In this context, Step \ref{Step12} of Algorithm \ref{alg_PartitionG} can be executed using Algorithm 1 of  \cite{Li2009} (Spectral Kernel Learning Algorithm) instead of Algorithm \ref{alg_SpectralPartitioning}.

\section{Valid Inequalities}
\label{sec:Valid_ineqs}

\review{Defining valid inequalities is a standard technique in Integer Programming to obtain improved formulations. Specifically, these inequalities are designed to cut-off, or tighten, a fractional part of the feasible region of a linear program and retain integral feasible solutions. 
Additionally, valid inequalities can reduce solution times by decreasing the number of nodes explored in the Branch and Bound (B\&B) tree. 
In this section, various families of valid inequalities are presented and analyzed. A computational study of the effectiveness of each family of inequalities, with respect to the corresponding formulation of the graph partitioning problem, is presented in Section \ref{resultados}.}

The first family of valid inequalities arises if there are leaves in the graph. Thus, if $\ell \in V$ is a node such that $u \in V$ is its unique neighbor, then the edge $\{\ell,u\}$ is part of any feasible solution. Hence, the following inequality holds:

\begin{theorem}\label{th:LC}
	Let $i \in V$ be a node with $|\delta(i)| = 1$ and $N(i) = \{j\}$. The equations
	\begin{align*}
		\sum_{c\in [k]} \hspace{-0.3em} x_{ij}^c=1
		 \qquad\text{and}\qquad 
		\sum_{c\in [k]}  \sum_{l \in V} f_{ji}^{l c}=0
	\end{align*}
	are valid for \PrimerModelo. Additionally,
	\( x_{ij}=1\), \( f_{ji}=1\), and \(f_{ij}=0\)
	are valid for \SegundoModelo.
\end{theorem}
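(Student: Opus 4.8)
The plan is to argue directly from the structure of a feasible solution, treating each formulation separately but leveraging the same combinatorial fact: a node $i$ of degree one must be joined to its unique neighbor $r$ inside whatever component contains $i$, since otherwise the component $V_c \ni i$ would be disconnected (the only edge incident to $i$ is $\{i,r\}$). I will make this connectivity argument the backbone of all four assertions.

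First I would establish the two equations for \PrimerModelo. For $\sum_{c\in[k]} x_{ir}^c = 1$, I would fix a feasible solution and let $c^\star$ be the unique component with $y_i^{c^\star}=1$ (unique by \eqref{restr_1.2}). Connectivity of $(V_{c^\star}, E(V_{c^\star}))$ together with $|V_{c^\star}|\ge\alpha\ge 2$ forces $r$ to also lie in $V_{c^\star}$, so $y_r^{c^\star}=1$, and constraint \eqref{restr_1.4} then gives $x_{ir}^{c^\star}=1$; for every other $c$ at least one of $y_i^c,y_j^c$ is zero, so \eqref{restr_1.3}--\eqref{restr_1.4} pin $x_{ir}^c=0$. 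Summing yields the first equation. For the flow equation, the key observation is that $i$ has degree one, so the only arcs incident to $i$ are $(i,r)$ and $(r,i)$. I would apply the flow conservation constraint \eqref{restr_1.8} at $j=i$ for each source $l$ and each component $c$, and combine it with the nonnegativity \eqref{variables_z} and the degree-one structure to conclude that no flow variable $f_{ir}^{lc}$ can be positive, giving $\sum_{c}\sum_{l} f_{ir}^{lc}=0$.

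Next I would handle \SegundoModelo. The identity $x_{ir}=1$ follows from the same connectivity reasoning: $i$ and $r$ must share a component, so by \eqref{restr_2.3} the edge variable $x_{ir}$ is forced to one. For the flow values $f_{ri}=1$ and $f_{ir}=0$, I would invoke the flow conservation constraint \eqref{restr_2.7} written at node $j=i$. Because $\delta^-_i$ and $\delta^+_i$ each contain exactly one arc (the graph arc through $r$, plus possibly the artificial-node arc from $\mathbb{A}$), I would argue that node $i$, being a degree-one leaf in $E$, receives its single required unit of net inflow along the arc $(r,i)$ while sending nothing back, which together with \eqref{restr_2.8} and the integrality/capacity bounds yields $f_{ri}=1$ and $f_{ir}=0$. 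Care is needed to account for whether an artificial arc from $\mathbb{A}$ can feed $i$ directly; I would note that the minimum-size constraint $\alpha\ge 2$ and \eqref{restr_2.9} make an isolated leaf-as-root configuration infeasible, so the unit must flow through $r$.

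The main obstacle I anticipate is the flow bookkeeping in \SegundoModelo, specifically disentangling the contributions of the original edge arcs from the artificial arcs in $E_1$ when applying conservation at the leaf $i$. Establishing that the net unit demand at $i$ is met exactly along $(r,i)$ — and not partially via an artificial node — requires carefully combining \eqref{restr_2.7}, the coupling \eqref{restr_2.8}, and the bounds \eqref{restr_2.9}, rather than a one-line argument. The \PrimerModelo\ equations, by contrast, should follow cleanly once the degree-one observation is in place.
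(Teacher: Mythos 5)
Your treatment of the edge variables is sound and matches the paper's connectivity observation (one slip: with $y_i^{c^\star}=y_r^{c^\star}=1$ it is \eqref{restr_1.3} that forces $x_{ir}^{c^\star}=1$, while \eqref{restr_1.4} is what pins $x_{ir}^{c}=0$ for the other components — you have the two constraints' roles swapped). The genuine gaps are in both flow arguments. For {\PrimerModelo}, the step ``conservation at $i$ plus nonnegativity implies no $f_{ir}^{lc}$ can be positive'' fails, and cannot be repaired, for the commodity whose source is the leaf itself: since the only arcs at $i$ are $(r,i)$ and $(i,r)$, constraint \eqref{restr_1.8} at $j=l=i$ reads $f_{ri}^{ic}-f_{ir}^{ic}=-\sum_{w:\{i,w\}\notin E}\bar{x}_{iw}^{c}$, i.e.\ $f_{ir}^{ic}=f_{ri}^{ic}+\sum_{w}\bar{x}_{iw}^{c}$, and \eqref{restr_1.5} forces $\bar{x}_{iw}^{c}=1$ for every node $w\notin\{i,r\}$ lying in the leaf's component. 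So whenever that component has three or more nodes, the constraints force $f_{ir}^{ic}>0$ — the opposite of your conclusion. For the remaining commodities, conservation at $i$ only fixes the difference $f_{ri}^{lc}-f_{ir}^{lc}$, so nonnegativity does not exclude circulations either.

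For {\SegundoModelo}, the point you flagged as delicate is exactly where your argument breaks, but your proposed fix is wrong: nothing makes the leaf-as-root configuration infeasible. If the artificial node $a$ of the leaf's component is attached to $i$ (i.e.\ $x_{ai}=1$), then \eqref{restr_2.5}, \eqref{restr_2.6} and \eqref{restr_2.9} give $f_{ai}=|V_{c^\star}|\ge\alpha\ge 2$; letting $i$ consume one unit and forward $f_{ir}=|V_{c^\star}|-1\ge 1$ to $r$ (and onward along a spanning tree of the component) satisfies \eqref{restr_2.7} and \eqref{restr_2.8}, producing a feasible point of {\SegundoModelo} with $f_{ir}>0$ and $f_{ri}=0$. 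What both obstructions reveal is that these flow identities are not satisfied by every feasible point of the extended formulations, so they cannot be established by the ``forced consequence'' route you chose. The paper's own one-line proof implicitly reads them in a weaker, existence sense — for any feasible partition one may choose the canonical flow in which the root is placed away from the leaf, one unit travels along $(r,i)$, and nothing returns — and under that reading the {\SegundoModelo} identities are unproblematic, whereas the {\PrimerModelo} identity still collides with the commodity-$i$ computation above whenever the leaf's component exceeds two nodes. Recognizing these obstructions, rather than asserting them away, is the substance missing from your proposal.
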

\begin{proof}
	Since $i \in V$ is a leaf in $G$ and $\alpha > 1$, then $\{j,i\}$ belongs to some connected component. For {\PrimerModelo}, at most $\beta - 2$ units of flow are sent to node $i$, meanwhile by flow conservation, zero units of flow are sent in the opposite direction. On the other hand, for {\SegundoModelo}, one unit of flow is sent from node $j$ to  node $i$, and by flow conservation no unit of flow are sent in the opposite direction. 
	\qed
\end{proof}

As a minimum number of nodes is needed for each connected component, there are pairs of nodes that cannot be assigned in the same part.  Recall that all subsets in a partition have at most $\beta$ nodes. Now, let us define \( \DP (i,j)\) as the minimum path length between nodes \(i\) and \(j\).

\begin{theorem}\label{th:SPC}
	Let $i,j\in V$ such that $\DP (i,j)\geq \beta$. Then the inequalities
	\[
		y_i^c + y_j^c\leq 1, \qquad \forall c\in [k]
	\]
	are valid for {\PrimerModelo} and {\SegundoModelo}.
\end{theorem}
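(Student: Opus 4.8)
The plan is to argue by contradiction, leaning on the structural observation made earlier in the excerpt that in any feasible solution every connected component contains at most $\beta$ nodes. Fix an arbitrary $c \in [k]$ and suppose, towards a contradiction, that $y_i^c = y_j^c = 1$, i.e. that both $i$ and $j$ are assigned to the same part $V_c$.

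Since both formulations enforce connectivity of the induced subgraph $(V_c, E(V_c))$ through their flow constraints, there exists a path $P$ from $i$ to $j$ whose nodes and edges all lie in $V_c$ and $E(V_c)$, respectively. The key step is then to observe that $P$ is in particular a path in $G$, so the number of edges of $P$ is at least the shortest-path distance $\DP(i,j)$. Because $\DP(i,j) \geq \beta$, the path $P$ uses at least $\beta$ edges and hence visits at least $\beta + 1$ distinct nodes, all of which belong to $V_c$. This forces $|V_c| \geq \beta + 1$, contradicting $|V_c| \leq \beta$. Therefore $y_i^c$ and $y_j^c$ cannot both equal one, which is precisely the inequality $y_i^c + y_j^c \leq 1$; as $c$ was arbitrary, the bound holds for every $c \in [k]$.

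The same reasoning applies verbatim to both {\PrimerModelo} and {\SegundoModelo}, since each guarantees connectivity of the parts together with the bound $|V_c| \leq \beta$: in {\SegundoModelo} this bound is made explicit by the capacity constraints on the arcs leaving the artificial nodes, while in {\PrimerModelo} it follows from the minimum-size constraints imposed on the remaining $k-1$ components. I do not anticipate a genuine obstacle here; the only points requiring care are the monotonicity of path lengths under taking subgraphs (any $i$--$j$ path contained in $V_c$ is at least as long as a shortest $i$--$j$ path in $G$) and the edge-versus-node bookkeeping that converts $\beta$ edges into $\beta + 1$ nodes, giving the strict violation of the cardinality bound.
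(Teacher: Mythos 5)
Your proof is correct and takes essentially the same route as the paper's (much terser) argument: since $\DP(i,j)\geq\beta$, any path joining $i$ and $j$ inside a common part would force that part to contain at least $\beta+1$ nodes, contradicting the size bound $|V_c|\leq\beta$ that both formulations imply. Your write-up merely makes explicit the path-existence, path-length monotonicity, and node-counting steps that the paper leaves implicit.
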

\begin{proof}
Since the number of edges in the shortest path from $i$ to $j$,  $\DP (i,j)\geq \beta$, then both nodes cannot belong to the same connected component. 
\qed
\end{proof}

The previous result can be generalized in the following way. {Let \( \LL =\{\ell_1, \ell_2, \ldots\) \(, \ell_q \} \) be a subset of the node set $V$} with $q\leq k$, such that $\DP (\ell_i,\ell_j)\geq \beta$ for all $\ell_i,\ell_j\in \LL$. 

\begin{corollary}
	Observe that if there exists a set $\LL$ with $q>k$, this set becomes an infeasibility certificate for the problem.
\end{corollary}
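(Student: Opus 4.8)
The plan is to argue by a straightforward counting/pigeonhole contradiction. Suppose, for the sake of contradiction, that the problem admits a feasible $k$-partition $\{V_1, \ldots, V_k\}$. The central observation I would exploit is the one established in Theorem~\ref{th:SPC}: if $\DP(\ell_i, \ell_j) \geq \beta$, then $\ell_i$ and $\ell_j$ cannot lie in the same connected component of the partition. By hypothesis, the set $\LL = \{\ell_1, \ldots, \ell_q\}$ satisfies $\DP(\ell_i, \ell_j) \geq \beta$ for \emph{every} distinct pair $\ell_i, \ell_j \in \LL$, so no two elements of $\LL$ can share a part.

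Next I would formalize this as an injection. Define the assignment map $\phi \colon \LL \to [k]$ that sends each node $\ell \in \LL$ to the index $c$ of the component $V_c$ containing it; this is well defined because $\{V_1, \ldots, V_k\}$ is a partition of $V$ and $\LL \subseteq V$. The pairwise-distance condition forces $\phi$ to be injective: if $\phi(\ell_i) = \phi(\ell_j) = c$ with $\ell_i \neq \ell_j$, then both nodes belong to $V_c$, contradicting Theorem~\ref{th:SPC}. An injective map from $\LL$ into $[k]$ requires $|\LL| \leq |[k]|$, that is $q \leq k$. Since by assumption $q > k$, this is impossible, and therefore no feasible partition can exist, establishing the infeasibility certificate.

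I do not anticipate a genuine obstacle here, since the corollary is essentially an immediate consequence of Theorem~\ref{th:SPC} combined with the pigeonhole principle. The only point requiring mild care is the logical direction: the statement is phrased as ``this set becomes an infeasibility certificate,'' so I would make explicit that \emph{detecting} such an $\LL$ with $q > k$ proves the instance has no feasible solution, rather than the converse. It may also be worth remarking, for completeness, that this generalizes the feasibility bound $\alpha < \lceil n/k \rceil$ noted earlier in Section~\ref{modelo}, since both are necessary conditions for a valid $k$-partition into connected components of size at least $\alpha$.
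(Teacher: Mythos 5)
Your proof is correct and matches the paper's intended reasoning: the paper states this corollary without any proof, treating it as an immediate consequence of Theorem~\ref{th:SPC}, and your pigeonhole formalization (the $q$ pairwise-distant nodes of $\LL$ must occupy $q$ distinct components among only $k$ available, which is impossible when $q>k$) is precisely the argument left implicit. Your closing remark about the logical direction --- that detecting such an $\LL$ certifies infeasibility, not the converse --- is also the right reading of the statement.
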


\begin{theorem}
If $q \in [k]$, the following equations are valid for {\PrimerModelo}: 
\begin{enumerate}[label=(\roman*)]
	\item $y_{\ell_i}^i=1$,  for all $\ell_i\in \LL $, $y_{\ell_i}^j=0,$ $i \neq j $;
	\item $ y_{u}^i=1, y_{u}^j=0,$  for all $ u \in N(\ell_i), \;  v \in \LL\setminus \{\ell_i\} $   such that $\DP(u,v)\geq \beta$,  $i \neq j $;
	\item $ x_{uv}^j=0,$  for all $u \in N(\ell_i) ,  v \in \LL\setminus \{\ell_i\}$   such that $d(u,v)\geq \beta$ and $i \neq j $.
\end{enumerate}
\noindent Additionally, \textbf{(i)} and \textbf{(ii)}  are valid for {\SegundoModelo}.
\end{theorem}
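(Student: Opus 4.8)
The plan is to derive everything from Theorem \ref{th:SPC} together with the invariance of both formulations under permutations of the component labels. The starting observation is structural: since $\DP(\ell_i,\ell_j)\geq\beta$ for every pair of distinct elements of $\LL$, Theorem \ref{th:SPC} forbids any two of them from sharing a component, so in every feasible partition the $q$ nodes $\ell_1,\dots,\ell_q$ occupy $q$ pairwise distinct components. Because the objective \eqref{objective_1.1} and all constraints of \PrimerModelo{} are unchanged by an arbitrary permutation of the indices in $[k]$, any feasible (hence any optimal) solution can be relabelled so that $\ell_i$ lies in component $i$ for each $i\in\{1,\dots,q\}$; this is exactly (i). The same relabelling argument applies verbatim to \SegundoModelo{}, whose objective \eqref{objective_2.1} and constraints are symmetric in the component index as well, which settles (i) for both models.

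For (ii) I would keep the labelling fixed by (i) and reason one node at a time. Let $u\in N(\ell_i)$ satisfy $\DP(u,v)\geq\beta$ for every $v\in\LL\setminus\{\ell_i\}$. For each such $v=\ell_j$ with $j\neq i$, Theorem \ref{th:SPC} supplies the valid inequality $y_u^{j}+y_{\ell_j}^{j}\leq1$; since (i) has already fixed $y_{\ell_j}^{j}=1$, this immediately yields $y_u^{j}=0$, and this deduction is common to \PrimerModelo{} and \SegundoModelo{}. To obtain $y_u^{i}=1$ I would then invoke the assignment constraint \eqref{restr_1.2} (respectively \eqref{restr_2.2}): $u$ belongs to exactly one component, and once every component $j\neq i$ has been excluded, $u$ is forced into $V_i$.

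Part (iii) should follow as a short corollary of (ii) for \PrimerModelo{}. Once $y_u^{j}=0$ is known for the relevant $j\neq i$, the coupling constraint \eqref{restr_1.4}, namely $y_u^{j}+y_v^{j}-2x_{uv}^{j}\geq0$, combined with $y_v^{j}\leq1$ gives $2x_{uv}^{j}\leq y_v^{j}\leq1$, so integrality forces $x_{uv}^{j}=0$. I would present this as an immediate consequence of the membership result rather than re-running the distance argument; no analogue is claimed for \SegundoModelo{} because its edge variables $x_{ij}$ carry no component index.

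The step I expect to be the main obstacle is making the forcing $y_u^{i}=1$ in (ii) rigorous when $q<k$. The inequalities coming from Theorem \ref{th:SPC} only rule $u$ out of the $q-1$ components pinned down by the remaining leaves, whereas the $k-q$ components that contain no element of $\LL$ are not excluded by any distance hypothesis; closing this gap requires either specializing to $q=k$, where no leaf-free component exists and \eqref{restr_1.2} alone finishes the argument, or augmenting the reasoning with a further exchange/symmetry argument guaranteeing that some optimal solution places $u$ in $V_i$. I would therefore organize the proof so that the unconditional part ($y_u^{j}=0$) is separated cleanly from the assignment $y_u^{i}=1$, and I would concentrate the care on justifying the latter.
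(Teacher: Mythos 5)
Your proposal is correct in everything it actually proves, and it follows the same route as the paper: for (i), the paper likewise argues that the pairwise distance hypothesis forces the nodes of $\LL$ into distinct components and then relabels components (``without loss of generality'') so that $\ell_i$ sits in component $i$, after which \eqref{restr_1.2} and \eqref{restr_2.2} give $y_{\ell_i}^j=0$; for the first half of (ii), the paper's one-line deduction ``then $y_u^j=0$'' is exactly your combination of the Theorem~\ref{th:SPC} inequality with the fixing from (i); and for (iii), your use of a coupling constraint is what the paper compresses into the phrase ``by propagation''. One small correction to (iii): in any feasible instance $\beta\geq\alpha\geq 2$, so $\DP(u,v)\geq\beta$ implies $\{u,v\}\notin E$; the variable being fixed is therefore $\bar{x}_{uv}^j$ and the constraint to invoke is \eqref{restr_1.6} rather than \eqref{restr_1.4}. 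The arithmetic is identical, and the conflation of $x$ with $\bar{x}$ originates in the paper's own statement.

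The obstacle you flag in your last paragraph is real, and it is a defect of the theorem as stated, not of your argument. The paper's proof never establishes $y_u^i=1$: it derives only $y_u^j=0$ and the edge fixing for $j\in[q]\setminus\{i\}$, silently dropping the rest; the claim $y_u^i=1$ reappears only in the corollary that follows, under the additional hypothesis $q=k$, exactly as you anticipate. For $q<k$ the fixing is genuinely invalid. A concrete counterexample: take $n=9$, $k=3$, $\alpha=3$ (so $\beta=3$), and let $G$ be the three paths $\ell_1\,p_1\,p_2$, \ $u\,a\,b$, \ $\ell_2\,c\,d$ joined by the two extra edges $\{\ell_1,u\}$ and $\{b,\ell_2\}$. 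Then $\DP(\ell_1,\ell_2)=4$ and $\DP(u,\ell_2)=3$, so $\LL=\{\ell_1,\ell_2\}$ with $q=2<k$ and $u\in N(\ell_1)$ satisfies the hypotheses of (ii). Give the two joining edges cost $1$ and all other edges cost $0$: the unique optimal partition is $\{\ell_1,p_1,p_2\}$, $\{u,a,b\}$, $\{\ell_2,c,d\}$, with cost $0$, and it places $u$ in the component containing no node of $\LL$; imposing $y_u^i=1$ (with $\ell_1$ pinned to component $i$ by (i)) raises the optimal value to at least $1$, so no relabelling can rescue the claim. Note the same example also kills the claim $y_u^j=0$ for labels $j>q$ (here $y_u^3=1$), so the correct scope of (ii) is precisely what you and the paper actually derive: $y_u^j=0$ for $j\in[q]\setminus\{i\}$ only, with $y_u^i=1$ recoverable only when $q=k$, where every component contains exactly one node of $\LL$ and the assignment constraint finishes the argument. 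Your plan of separating the unconditional fixings from $y_u^i=1$ is therefore the correct resolution, and the theorem's statement of (ii) should be weakened accordingly.
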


\begin{proof}
	Observe that if $\LL\neq \emptyset$, then $q\geq 2$. Thus, as $\DP(u,v)\geq \beta$, for all $u,v \in \LL$, then every node in $\LL$ belongs to different connected components. In order to avoid symmetric solutions, and without loss of generality, the indexes of nodes in the set $\LL$ can be matched with those in the set of connected components $\{1,2,\ldots,q\}$. Then, one can fix \(y_{\ell_i}^i=1\), for all \(\ell_i\in \LL\) with \(i\in [q]\) and by constraints \eqref{restr_1.2} and \eqref{restr_2.2}, $y_{\ell_i}^j=0$ for all $j\neq i$. On the other hand, if $u\in N(\ell_i)$ is such that $\DP(u,v)\geq \beta$ for all $ v\in \LL\setminus \{\ell_i\}$, then $y_{u}^j=0$ and, by propagation, $x_{uv}^j=0$ for $j\in [q]\setminus \{i\}$.
	\qed
\end{proof}

\begin{corollary}
	If $q=k$ and for each \( i \in [q]\) where there exists $u\in N(\ell_i)$ such that $\DP(u,v)\geq \beta$ with $v\in \LL\setminus \{ \ell_i\}$, then $y_u^i=x_{uv}^i=1$ hold for {\PrimerModelo} and $x_{uv}=1$ holds for  {\SegundoModelo}.
\end{corollary}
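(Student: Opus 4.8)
The plan is to obtain this fixing as a direct strengthening of the previous theorem, the new ingredient being the counting hypothesis $q=k$. Recall that part~(i) anchors each special node by $y_{\ell_i}^i=1$, while the proof of part~(ii) shows that a neighbour $u\in N(\ell_i)$ satisfying $\DP(u,v)\geq\beta$ for every $v\in\LL\setminus\{\ell_i\}$ must have $y_u^j=0$ for all $j\in[k]\setminus\{i\}$ restricted to $[q]$; both facts are available for \PrimerModelo\ and \SegundoModelo. The edge whose variable is being fixed is the one joining $u$ to its anchor, namely $\{u,\ell_i\}\in E$, which exists precisely because $u\in N(\ell_i)$.

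First I would fix the membership variable of $u$. Since $q=k$, the index set $[q]$ is the whole set $[k]$, so the zeros from part~(ii) read $y_u^j=0$ for every $j\in[k]\setminus\{i\}$. Feeding this into the assignment constraint $\sum_{c\in[k]}y_u^c=1$---that is \eqref{restr_1.2} for \PrimerModelo\ and \eqref{restr_2.2} for \SegundoModelo---leaves a single surviving term and therefore forces $y_u^i=1$. This is exactly where $q=k$ is indispensable: if $q<k$ the node $u$ could still be placed in one of the untouched components $q+1,\dots,k$, and the argument would collapse.

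Next I would activate the anchoring edge. Together with $y_{\ell_i}^i=1$ from part~(i), the membership values $y_u^i=y_{\ell_i}^i=1$ enter the coupling inequality \eqref{restr_1.3}, $y_u^i+y_{\ell_i}^i-x_{u\ell_i}^i\leq 1$, which gives $x_{u\ell_i}^i\geq 1$; with the binary bound this yields $x_{u\ell_i}^i=1$ for \PrimerModelo. The same substitution in \eqref{restr_2.3}, $y_u^i+y_{\ell_i}^i-x_{u\ell_i}\leq 1$, gives $x_{u\ell_i}\geq 1$ and hence $x_{u\ell_i}=1$ for \SegundoModelo.

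I do not expect a genuine obstacle here, since everything is propagation from the preceding theorem; the one point requiring care is purely combinatorial bookkeeping, namely recognising that $q=k$ is what converts the \emph{negative} fixings $y_u^j=0$ into the \emph{positive} conclusions $y_u^i=1$ and $x_{u\ell_i}=1$. I would also carry over the symmetry-breaking caveat of part~(i): the pairing of $\ell_i$ with component $i$ is a labelling chosen without loss of generality, so the statement is to be understood up to that fixed matching of $\LL$ with $[k]$.
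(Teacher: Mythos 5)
Your argument is correct and is precisely the propagation the paper leaves implicit (the corollary is stated without proof): $q=k$ makes $[q]=[k]$, so the zero-fixings $y_u^j=0$ for $j\neq i$ from part (ii) of the preceding theorem, combined with the assignment constraint \eqref{restr_1.2} (resp.\ \eqref{restr_2.2}), force $y_u^i=1$, and then the coupling constraint \eqref{restr_1.3} (resp.\ \eqref{restr_2.3}) applied to the edge $\{u,\ell_i\}\in E$ forces its variable to one. You were also right to silently correct the statement's notation by taking the fixed edge to be $\{u,\ell_i\}$ rather than $\{u,v\}$: for $v\in \LL\setminus\{\ell_i\}$ one has $\DP(u,v)\geq\beta\geq 2$, so $\{u,v\}$ is not even an edge of $G$ (and under the part-(i) fixings $u$ and $v$ lie in different components), hence the corollary's $x_{uv}$ is a typo for $x_{u\ell_i}$ and your reading is the intended one.
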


\begin{theorem}
	If $q \in [k]$, then the set  $ \mathcal{U}(\ell_i)=\{u\in V: y_u^i=1 \}$ is nonempty for some \(i \in [q]\). If $| \mathcal{U}(\ell_i)| < \alpha$, then the following holds for  {\PrimerModelo} and  {\SegundoModelo}:
	\[
		\sum_{j\in N\big( \mathcal{U}(\ell_i) \big)} \hspace{-1.3em}  y_j^i \geq 1.
	\]
\end{theorem}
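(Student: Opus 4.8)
The plan is to derive the inequality directly from the connectivity and minimum-size requirements that both formulations impose on each component. First I would dispose of the nonemptiness claim: the preceding theorem fixes $y_{\ell_i}^i = 1$ for every $\ell_i \in \LL$, so $\ell_i \in \mathcal{U}(\ell_i)$ and the set is nonempty.

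Next I would consider an arbitrary feasible integer solution in which $y_u^i = 1$ holds for every $u \in \mathcal{U}(\ell_i)$, and write $V_i = \{u \in V : y_u^i = 1\}$ for the $i$-th component in that solution, so that $\mathcal{U}(\ell_i) \subseteq V_i$. Two structural facts are available: the subgraph induced on $V_i$ is connected, which is exactly what the flow constraints \eqref{restr_1.7}--\eqref{restr_1.8} of {\PrimerModelo} and \eqref{restr_2.5}--\eqref{restr_2.9} of {\SegundoModelo} enforce, and $|V_i| \geq \alpha$ by constraints \eqref{restr_1.9} and \eqref{restr_2.9}, respectively. The hypothesis $|\mathcal{U}(\ell_i)| < \alpha \leq |V_i|$ then makes the inclusion strict, so the set $W := V_i \setminus \mathcal{U}(\ell_i)$ is nonempty.

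The decisive step is graph-theoretic. The sets $\mathcal{U}(\ell_i)$ and $W$ partition $V_i$ into two nonempty parts, and connectivity of the subgraph induced by $V_i$ forces an edge $\{u,j\}$ with $u \in \mathcal{U}(\ell_i)$ and $j \in W$. Such a node $j$ is adjacent to $\mathcal{U}(\ell_i)$ and lies outside it, hence $j \in N(\mathcal{U}(\ell_i))$, while $y_j^i = 1$ because $j \in V_i$. Therefore $\sum_{j \in N(\mathcal{U}(\ell_i))} y_j^i \geq y_j^i = 1$, and since the feasible solution was arbitrary the inequality is valid for both models.

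I do not expect a serious obstacle; the single point demanding care is that the strict inclusion $\mathcal{U}(\ell_i) \subsetneq V_i$ be justified, which is precisely where the hypothesis $|\mathcal{U}(\ell_i)| < \alpha$ enters, together with the observation that both formulations genuinely certify connectivity of each component, so that the cut edge between $\mathcal{U}(\ell_i)$ and $W$ is guaranteed to exist.
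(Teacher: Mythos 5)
Your proof is correct and takes essentially the same approach as the paper, whose one-sentence argument is that since $0 < |\mathcal{U}(\ell_i)| < \alpha$, at least one node in the neighborhood of $\mathcal{U}(\ell_i)$ must be included in the $i$-th component to reach the minimum number of nodes. Your write-up merely makes explicit the two ingredients the paper leaves implicit: the minimum-size constraints force $V_i \setminus \mathcal{U}(\ell_i) \neq \emptyset$, and connectivity of the induced component forces an edge across that cut, yielding a neighbor $j$ with $y_j^i = 1$.
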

\begin{proof}
	As $0 < |\mathcal{U}(\ell_i)| < \alpha$, then at least one node in the neighborhood of \(\mathcal{U}(\ell_i)\) must be included in the $i$-th component to reach the minimum number of nodes.
	\qed
\end{proof}

\begin{theorem}\label{th:S-C}
	Let $S\subset V$ such that $|S| < \alpha$. Then the following is valid for {\PrimerModelo}
	\[
		\sum_{c\in [k]} \sum_{ \{i,j\}\in {\delta(S)}}  \hspace{-0.5em}  x_{ij}^c\geq 1,
	\]
	and the following holds for {\SegundoModelo}
	\[
		\sum_{ \{i,j\}\in {\delta(S)}} \hspace{-0.5em}  x_{ij} \geq 1.
	\]
\end{theorem}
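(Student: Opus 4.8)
The plan is to read the left-hand side combinatorially and then exhibit, for an arbitrary feasible integer solution, a single edge that already forces the sum to be at least one. Recall that for \PrimerModelo{} the term $x_{ij}^c$ equals one exactly when both endpoints of $\{i,j\}$ lie in the same component $V_c$, so $\sum_{c\in[k]}\sum_{\{i,j\}\in\delta_S}x_{ij}^c$ counts precisely those cut edges whose two endpoints happen to be placed in a common part of the partition. The whole claim therefore reduces to showing that in every feasible partition at least one edge of $\delta_S$ has both endpoints in the same component.

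First I would fix a feasible solution with partition $\{V_1,\dots,V_k\}$ and choose any node $v\in S$, letting $V_c$ be the component containing it (this presumes $S\neq\emptyset$; otherwise $\delta_S=\emptyset$ and no feasible partition can cover $S$ at all). Constraints \eqref{restr_1.9} give $|V_c|\geq\alpha>|S|$, whence $V_c\not\subseteq S$, so $V_c$ also contains some node $w\notin S$. Next I would invoke the connectivity of the induced subgraph $(V_c,E(V_c))$: there is a path from $v$ to $w$ using only nodes of $V_c$. Walking along this path from $v\in S$ towards $w\notin S$, let $\{a,b\}$ be the first edge encountered with $a\in S$ and $b\notin S$. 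By construction $\{a,b\}\in\delta_S$ and, crucially, both $a$ and $b$ belong to $V_c$.

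It then remains to translate this witnessing edge into the two formulations. For \PrimerModelo{}, since $y_a^c=y_b^c=1$, constraints \eqref{restr_1.3} force $x_{ab}^c=1$, so the double sum is at least one. For \SegundoModelo{}, the variable $x_{ab}$ becomes one as soon as $\{a,b\}$ lies inside any single component, which is exactly the present situation by \eqref{restr_2.3}, giving $\sum_{\{i,j\}\in\delta_S}x_{ij}\geq1$. As the feasible solution was arbitrary, both inequalities are valid.

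I expect the only genuinely delicate point to be guaranteeing that the witnessing edge is \emph{internal} to a single component rather than merely crossing between two different parts: an edge of $\delta_S$ contributes nothing to the sum unless both of its endpoints share a component, and a careless argument might only produce an edge leaving $S$ into a foreign part. Routing the path entirely inside $V_c$ is what secures this, and it is exactly here that the connectivity requirement on each part is used. The degenerate cases are disposed of in passing: $S=\emptyset$ empties the hypothesis of content, and $\delta_S=\emptyset$ would render the instance itself infeasible (no part could reach $\alpha>|S|$ nodes while staying confined within the components of $S$), so the inequality holds vacuously there.
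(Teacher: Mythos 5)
Your proof is correct, and in fact the paper states Theorem~\ref{th:S-C} with no proof at all, so your argument supplies the missing justification rather than duplicating one. The route you take --- pick $v\in S$, use $|V_c|\geq\alpha>|S|$ from \eqref{restr_1.9} to find $w\in V_c\setminus S$, and walk a path \emph{inside} the connected component $V_c$ to obtain an edge of $\delta_S$ with both endpoints in $V_c$, which then forces $x_{ab}^c=1$ via \eqref{restr_1.3} and $x_{ab}=1$ via \eqref{restr_2.3} --- is exactly the intended one, and your remark that the witnessing edge must lie within a single component (an arbitrary cut edge contributes nothing) identifies the one genuinely delicate point; the same path argument also shows that $S\neq\emptyset$ with $\delta_S=\emptyset$ makes the instance infeasible, so validity holds vacuously there. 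The only caveat worth recording is that the theorem as stated silently needs $S\neq\emptyset$, since for $S=\emptyset$ the left-hand side is an empty sum and the inequality fails on every feasible solution; you flag this correctly.
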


\begin{figure}[htbp]
\vspace{-0.5\baselineskip}
\begin{center}
	\includegraphics[scale=0.8,page=4]{Figures.pdf}
\caption{\review{An instance of the partitioning problem into connected components with minimum size \(\alpha=4\) is presented to exemplify Theorem \ref{th:S-C}. Here the subset \(S = \{A,C,E\} \subseteq V\) is considered, and its cut is highlighted in thick lines and corresponds to \( \{ AB, BC, CD, CF, EF \} \).
}}
\label{fig:S-C-cut}
\end{center}
\vspace{-1\baselineskip}
\end{figure}

\review{Figure \ref{fig:S-C-cut} depicts the intuition behind Theorem \ref{th:S-C}. Here, an instance with \(7\) nodes and minimum size \( \alpha=4\) is considered. The subset \(S = \{A,C,E\} \subset V \) is highlighted with a dashed rectangle. Due to the minimum size constraint, any feasible solution containing \(S\), must have an additional node belonging to its cut. In other words, Theorem \ref{th:S-C} establishes that the total sum of the variables associated to the edges in $\delta(S)$ must be at least one.}

Furthermore, the following theorems are derived from valid inequalities introduced by Hojny and Miyazawa \cite{Hojny2020,Miyazawa2021}. To state these  kind of valid inequalities, the definition of \emph{articulation node} must be introduced. Here, a node $u \in V$ is an articulation node if the graph obtained by deleting $u$ is disconnected. \review{Figure \ref{fig:Art_node} illustrates the previous definition and its application to Theorem \ref{th:art_node} stated below.}

\begin{theorem}\label{th:art_node}
	Let $\ell \in V$ be an articulation node and let $i$ and $j$ be two nodes from different connected components in the induced graph by $V\setminus \{\ell\}$. The following inequalities are  valid for {\PrimerModelo}  and {\SegundoModelo}:
	\[
		y_i^c + y_j^c-y_{\ell}^c \leq 1, \qquad \forall c\in [ k].
	\]
\end{theorem}


\begin{figure}[htbp]
\vspace{-1\baselineskip}
\begin{center}
	\includegraphics[scale=0.8,page=5]{Figures.pdf}
\caption{\review{Articulation node $\ell$ separating two connected components associated with nodes \(i\) and \(j\).}}
\label{fig:Art_node}
\end{center}
\vspace{-1\baselineskip}
\end{figure}

The following concept is useful to generalize the previous result. Let $u$ and $v$ be two non-adjacent nodes in the graph, the set $S\subseteq V\setminus \{u,v\}$ is said a \((u,v)\)--\emph{separator} if $u$ and $v$ belong to different components in the graph induced by  $V\setminus S$.  Let $\mathcal{S}(u,v)$ be the collection of all minimal $(u, v)-$separators in $G$. Then, next result guarantees a connected subgraph. 

\begin{theorem}\label{th:(u,v)SC}
	Let $i,j\in V$ be two non-adjacent nodes and let $S\in \mathcal{S}(i,j)$ be a \((i,j)\)--\emph{separator}. The following inequalities are  valid for {\PrimerModelo}  and {\SegundoModelo}:
	\[
		y_i^c + y_j^c - \sum_{\ell \in S} y_{\ell}^c \leq 1, \qquad \forall c\in [ k].
	\]
\end{theorem}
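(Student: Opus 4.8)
The plan is to prove validity for both models simultaneously by exploiting the single feature they share that is relevant here: each part $V_c$ of the partition is forced to induce a connected subgraph. I would fix an arbitrary component index $c\in[k]$, take any feasible solution, and split into two cases according to the values of the binary variables $y_i^c$ and $y_j^c$.

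The trivial case is $y_i^c+y_j^c\leq 1$. Since every variable $y_\ell^c$ is nonnegative, we immediately get $y_i^c+y_j^c-\sum_{\ell\in S}y_\ell^c\leq 1-0=1$, so the inequality holds no matter how the separator nodes are assigned. The substantive case is $y_i^c=y_j^c=1$, meaning both $i$ and $j$ are placed in the same component $V_c$. Here I would invoke the defining property of an $(i,j)$--separator: in the subgraph induced by $V\setminus S$ the nodes $i$ and $j$ lie in different connected components, which is equivalent to the statement that \emph{every} $i$--$j$ path in $G$ uses at least one node of $S$. Because the models force $(V_c,E(V_c))$ to be connected and both endpoints belong to $V_c$, there exists an $i$--$j$ path $P$ lying entirely inside $V_c$. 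As $P$ is in particular a path in $G$, it must meet $S$ at some node $\ell$; but $P\subseteq V_c$ forces $\ell\in V_c$, hence $y_\ell^c=1$ and $\sum_{\ell\in S}y_\ell^c\geq 1$. Combining these, $y_i^c+y_j^c-\sum_{\ell\in S}y_\ell^c\leq 2-1=1$, which closes the case.

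The step I expect to be the crux is making rigorous the claim that the connected component necessarily carries an $i$--$j$ path meeting $S$; this is exactly where the separator hypothesis interacts with the connectivity that the flow constraints enforce (constraints \eqref{restr_1.8} for {\PrimerModelo} and constraints \eqref{restr_2.5}--\eqref{restr_2.7} for {\SegundoModelo}). Once one accepts that a feasible solution yields a genuinely connected induced subgraph on each $V_c$, the rest is the elementary graph-theoretic fact that a path between two separated endpoints crosses the separator.

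Finally, I would remark that this result specializes to the articulation-node theorem when $S=\{\ell\}$ is a singleton separator, so the argument is the natural set-valued extension of that earlier case. It is also worth noting that minimality of $S$ is never used in the validity argument; the inequality holds for \emph{any} $(i,j)$--separator, and restricting attention to $S\in\mathcal{S}(i,j)$ merely produces the sharpest (non-dominated) members of this family.
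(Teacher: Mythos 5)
Your proof is correct. Note that the paper itself offers no proof of this theorem: it is stated without one, introduced only as being ``derived from valid inequalities introduced by Hojny and Miyazawa'' \cite{Hojny2020,Miyazawa2021}. Your two-case argument --- trivial when $y_i^c+y_j^c\leq 1$ since the $y_\ell^c$ are nonnegative; otherwise both endpoints lie in $V_c$, whose induced subgraph the flow constraints force to be connected, so an $i$--$j$ path inside $V_c$ exists and, by the separator property, must pass through some $\ell\in S\cap V_c$, giving $\sum_{\ell\in S}y_\ell^c\geq 1$ --- is exactly the standard reasoning behind such separator inequalities in those references, so you have supplied the intended proof rather than a genuinely different one. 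Your closing observations are also accurate: minimality of $S$ plays no role in validity (it only makes the inequality non-dominated), and the articulation-node inequality stated just before this theorem is recovered as the singleton case $S=\{\ell\}$.
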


The following two valid inequalities have a simple structure and define a simple lower bound for the number of edges in a feasible solution.

\begin{theorem}\label{th:LBC}
	The inequality 
	\[
		\sum_{c\in [k]} \sum_{ \{i,j\}\in E} \hspace{-0.4em}  x_{ij}^c \geq n-k 
	\]
	is valid for {\PrimerModelo}, and
	\[
		\sum_{ \{i,j\}\in E} \hspace{-0.3em}  x_{ij} \geq n-k 
	\]
	is valid for {\SegundoModelo}.
\end{theorem}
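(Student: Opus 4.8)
The plan is to verify the inequality at every integer feasible point, which suffices for validity since both claims are linear. The whole argument rests on one elementary fact: a connected graph on $m$ nodes contains a spanning tree and therefore has at least $m-1$ edges.

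First I would fix a feasible solution of {\PrimerModelo} and extract the partition $\{V_1,\dots,V_k\}$ determined by the $y$-variables through \eqref{restr_1.2}, setting $V_c=\{i\in V:\, y_i^c=1\}$. The pair of constraints \eqref{restr_1.3}--\eqref{restr_1.4} forces $x_{ij}^c=1$ precisely when both $y_i^c=1$ and $y_j^c=1$; hence for each fixed $c$ the selected edges are exactly the edges of the induced subgraph, and $\sum_{\{i,j\}\in E}x_{ij}^c=|E(V_c)|$. By feasibility the flow system \eqref{restr_1.7}--\eqref{restr_1.8} guarantees that each induced subgraph $(V_c,E(V_c))$ is connected, so the spanning-tree bound gives $|E(V_c)|\ge |V_c|-1$.

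The count is then immediate: summing over $c$ and using that $\{V_1,\dots,V_k\}$ is a partition, one obtains $\sum_{c\in[k]}\sum_{\{i,j\}\in E}x_{ij}^c=\sum_{c\in[k]}|E(V_c)|\ge\sum_{c\in[k]}(|V_c|-1)=n-k$, where the last identity is just $\sum_{c}|V_c|=n$ minus the $k$ subtracted units. For {\SegundoModelo} the variable $x_{ij}$ aggregates the per-component indicators: by \eqref{restr_2.3}--\eqref{restr_2.4} one has $x_{ij}=1$ exactly when $i$ and $j$ lie in a common component, so $\sum_{\{i,j\}\in E}x_{ij}=\sum_{c\in[k]}|E(V_c)|$ again, and the identical bound applies.

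Structurally there is no hard obstacle here — the statement is essentially the observation that a spanning forest of $G$ with $k$ trees has exactly $n-k$ edges, and any feasible solution selects at least those. The only points requiring care are bookkeeping ones: confirming from \eqref{restr_1.3}--\eqref{restr_1.4} (respectively \eqref{restr_2.3}--\eqref{restr_2.4}) that every internal edge is indeed counted, so that $\sum x_{ij}^c$ really equals $|E(V_c)|$ rather than the size of some smaller connected subgraph, and invoking connectivity — the genuine source of the $|V_c|-1$ lower bound — from the flow constraints rather than attempting to reprove it.
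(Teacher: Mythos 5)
Your proof is correct and matches the intended argument: the paper states Theorem~\ref{th:LBC} without an explicit proof, introducing it only as a ``simple lower bound,'' and the justification it relies on is precisely yours --- each of the $k$ parts induces a connected subgraph on $|V_c|$ nodes, hence contains at least $|V_c|-1$ selected edges, and summing over the partition gives $n-k$. Your bookkeeping steps, using \eqref{restr_1.3}--\eqref{restr_1.4} (resp.\ \eqref{restr_2.3}--\eqref{restr_2.4}) to identify the selected edges with the induced edges and the flow constraints for connectivity, are also sound.
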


For any subset of nodes $C$ with $|C| > k$  such that $(C,E(C))$ forms a clique, then it holds that several edges of the clique lie in the same connected component. Thus, the so-called clique inequalities hold:

\begin{theorem}\label{th:CC}
Let $C\subset V$, such that $(C,E(C))$ is a clique with $|C| > k$. Let $q =|C| -k $, then
\[
	\sum_{c\in [k]} \sum_{ \{i,j\}\in E(C) } \hspace{-0.7em} x_{ij}^c\geq 
	\begin{cases}
		\max\left\{ \left\lfloor \frac{|C|}{2} \right\rfloor \left( \left\lfloor \frac{|C|}{2} \right\rfloor -1\right), q \right\} &\text{ if } k=2,\\ 
	q &\text{ if }  k>2,
\end{cases}
\]
are valid inequalities for {\PrimerModelo}. In similar way, 
\[
	\sum_{ \{i,j\}\in E(C) } \hspace{-1em} x_{ij}\geq 
	\begin{cases}
		\max\left\{ \left\lfloor \frac{|C|}{2} \right\rfloor \left( \left\lfloor \frac{|C|}{2} \right\rfloor -1\right), q \right\} &\text{ if } k=2,\\ 
	q &\text{ if }  k>2,
	\end{cases}
\]
are valid inequalities for {\SegundoModelo}.
\end{theorem}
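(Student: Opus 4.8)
The plan is to read the left-hand side combinatorially. In any feasible integer solution of \PrimerModelo, constraints \eqref{restr_1.3} and \eqref{restr_1.4} force $x_{ij}^c=1$ exactly when $y_i^c=y_j^c=1$, so $\sum_{c\in[k]}\sum_{\{i,j\}\in E(C)}x_{ij}^c$ counts precisely those clique edges whose two endpoints lie in the same component. Writing $n_c=|C\cap V_c|$ for the number of clique nodes placed in component $c$, so that $\sum_{c\in[k]}n_c=|C|$, this count equals $\sum_{c\in[k]}\binom{n_c}{2}$, since $C$ is a clique and hence every pair of clique nodes sharing a component contributes exactly one edge. The same reading applies to \SegundoModelo: because the components are pairwise disjoint, the single variable $x_{ij}$ equals $1$ precisely when $\{i,j\}$ has both endpoints in one component, whence $\sum_{\{i,j\}\in E(C)}x_{ij}=\sum_{c\in[k]}\binom{n_c}{2}$ as well. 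Both inequalities therefore reduce to the same estimate, and it suffices to bound $\sum_{c\in[k]}\binom{n_c}{2}$ from below over all admissible distributions $(n_1,\dots,n_k)$.

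First I would establish the bound $q$, which is valid for every $k\ge 2$. Let $k'$ be the number of components with $n_c\ge 1$. Using the elementary inequality $\binom{n_c}{2}\ge n_c-1$ for each such component and discarding the nonnegative terms with $n_c=0$, I obtain
\[
\sum_{c\in[k]}\binom{n_c}{2}\ \ge\ \sum_{c:\,n_c\ge 1}(n_c-1)\ =\ |C|-k'\ \ge\ |C|-k\ =\ q,
\]
where the last inequality uses $k'\le k$. This already gives the claim in the case $k>2$.

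For $k=2$ I would sharpen the estimate. Here $n_1+n_2=|C|$, and the map $t\mapsto\binom{t}{2}+\binom{|C|-t}{2}$ is convex, so its minimum over integers is attained at the balanced split $t=\lfloor|C|/2\rfloor$. Setting $m=\lfloor|C|/2\rfloor$, a direct evaluation gives the balanced value $m(m-1)$ when $|C|$ is even and $m^2$ when $|C|$ is odd, both of which are at least $m(m-1)=\lfloor|C|/2\rfloor\bigl(\lfloor|C|/2\rfloor-1\bigr)$. Hence $\sum_{c\in[2]}\binom{n_c}{2}\ge\lfloor|C|/2\rfloor\bigl(\lfloor|C|/2\rfloor-1\bigr)$. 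Since $q$ and, for $k=2$, the balanced-split term are both lower bounds for the common quantity $\sum_{c\in[k]}\binom{n_c}{2}$ that equals each model's left-hand side, their maximum is again a lower bound, which is exactly the stated expression. This proves validity for both \PrimerModelo\ and \SegundoModelo.

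The main obstacle I anticipate is the $k=2$ refinement: one must justify carefully that the convex split function is minimized at the even distribution and then verify, by the parity computation above, that this minimum dominates $m(m-1)$ in both cases, rather than merely asserting that spreading the clique nodes evenly minimizes the count. By contrast, linking the integer variables to the combinatorial count $\sum_c\binom{n_c}{2}$ is routine once constraints \eqref{restr_1.3}--\eqref{restr_1.4} (respectively the disjointness of the components for \SegundoModelo) are invoked.
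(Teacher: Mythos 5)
Your proposal is correct, and it takes a genuinely different route from the paper's own proof. After the (shared, routine) step of reading the left-hand side as $\sum_{c\in[k]}\binom{n_c}{2}$ with $n_c=|C\cap V_c|$, you prove the bound $q$ directly for every $k\ge 2$ via the elementary inequality $\binom{n_c}{2}\ge n_c-1$ summed over nonempty components, and you handle the $k=2$ refinement by convexity of $t\mapsto\binom{t}{2}+\binom{|C|-t}{2}$ together with a parity check at the balanced split. The paper instead proves the $k>2$ case by induction on $|C|$: it peels off a node $\imath$, distinguishes whether $\imath$ is alone in its component (reduce to a clique of size $p-1$ on $k-1$ components) or shares it with some $\jmath$ (reduce to size $p-2$ and add the edge $\{\imath,\jmath\}$); and for $k=2$ it checks small cases and then argues that the even split minimizes the count by verifying that moving a single node away from it increases the number of edges. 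Your argument buys brevity and rigor: it needs no induction, treats the $q$ bound uniformly in $k$, and the convexity argument establishes \emph{global} minimality of the balanced split, whereas the paper's exchange step only certifies that one local perturbation is worse, which strictly speaking is a local-optimality check. The paper's induction, on the other hand, is self-contained and makes the combinatorial peeling structure of $k$-partitions explicit without appealing to convexity. One point worth keeping from your own closing remark: the reduction to $\sum_c\binom{n_c}{2}$ does deserve the sentence you give it, since for {\PrimerModelo} it relies on \eqref{restr_1.3}--\eqref{restr_1.4} forcing $x_{ij}^c=1$ exactly when $y_i^c=y_j^c=1$, and for {\SegundoModelo} on \eqref{restr_2.3}--\eqref{restr_2.4}; connectivity of the components never enters because any subset of a clique induces all of its pairwise edges.
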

\begin{proof}
	Let us define the integer \(p = k +q\) as the number of nodes in \(C\), and start with the case \(k =2\) for any other integer \( q\geq 1\).
	Notice that a clique of size \(4\) must have at least \(2\) edges in a \(2\)-connected partitioning, and the cases \(p=3\) and  \(p=5\) are similar. 
	As a result, let us further assume that \(p\geq 6\) is even without losing generality. 
	In this setting, \( \frac{p}{2} (\frac{p}{2} - 1) = \frac{q}{2} (1 + \frac{q}{2})> q\), and this is  the smallest number of edges from \(C\) in a \(2\)-connected partitioning. To see this, notice that this quantity is just the result of evenly splitting \(C\) in two parts, namely \( C = P_1 \cup P_2\), and assigning each half to a different connected component.
	Now, if a node \(\imath\) is taken from \( P_1\) and assigned to the other component, then \( P_2 \cup \{\imath\}\) would form a clique of size \( \frac{p}{2} + 1\), for the number of edges from \(C\) in this partitioning changes to
	\(
		\frac{1}{2} (\frac{p}{2} + 1)\frac{p}{2} 
		+
		\frac{1}{2} (\frac{p}{2} - 1) (\frac{p}{2} - 2)
		=
		\frac{1}{2} (\frac{q}{2} + 2) (\frac{q}{2} +1)
		+
		\frac{1}{2} \frac{q}{2} (\frac{q}{2} - 1),
	\)
	which is greater than \(\frac{q}{2} (1 + \frac{q}{2})\).

	The proof for \(k> 2\) is done by induction over \(|C|\) as follows. 
	The simplest case is when \( |C| = 4\) with \( k = 3\) and \(q = 1\). Here it is clear that at least two nodes from \(C\) must belong to the same connected component, and hence at least \(q=1\) edges from the clique will be included in the solution.
	%
	%

	Let us assume that for a clique \(C\)  with \(p-1\) nodes at least
	\[
		S_{p-1} := 
		\begin{cases}
			\max\Big\{ \big\lfloor \frac{p-1}{2} \big\rfloor \big( \big\lfloor \frac{p-1}{2} \big\rfloor -1\big), q \Big\} &\text{ if } k=2,
			\\ 
			q &\text{ if }  k>2,
		\end{cases}
	\]
	edges from \( (C,E(C))\) are present in a \(k\)-connected partitioning. 
	
	Now, consider the case \( p = k +q\) again with \(k > 2\). Here select a node \( \imath \in C\) and identify its assigned connected component as \( V_\ell \), with some \(\ell \in [k]\). There are two scenarios: either \(\imath\) is the only node from \(C\) in \(V_\ell\), or there exists another \(\jmath \in C\cap V_\ell\) different from \(\imath\).
	In the former case,  \( C\setminus \{\imath\}\) has to be distributed in \(k-1\) components. Notwithstanding, this is a clique of size \( p-1 = (k-1) + q\), where at least \( S_{p-1}\) edges are present in a connected (\(k-1\))-connected partitioning,
	%
	which is greater or equal than \(q\). 
	The latter case is similar. Without losing generality, it can be assumed that \(|C\cap V_\ell| = 2\) (the argument when \(|C\cap V_\ell| > 2\) is similar), then \( |C\setminus \{\imath,\jmath\}| = (k-1) + (q-1) = p-2\). Again \( S_{p-2} \geq q-1\), and as the edge \(\{\imath,\jmath\}\) is in \( \big(V_\ell, E(V_\ell) \big) \), then there are at least \(q\) edges from \(C\) in a \(k\)-connected partitioning. 
	\qed
\end{proof}

For the sake of simplicity, the number of nodes in a connected component \(f\in \mathcal{F}\) is denoted by \(|f|\). Additionally, let \(H(\ell) \) be the subset of connected components in \(\mathcal{F}\) of size \(\ell\geq \alpha\).

\begin{theorem}\label{th:Base-M3-ImplementedCut}
	For every integer \(\ell\neq \alpha\) such that \( \left \lceil \frac{n}{k} \right \rceil \leq \ell \leq \beta\), then the following inequality is valid for {\TercerModelo}:
	\[
		\sum_{ \mathclap{f\in H(\ell)}} x_f \leq 
		\left\lfloor \frac{n-k\alpha}{\ell-\alpha} \right\rfloor .
	\]
\end{theorem}
\begin{proof}
	It is clear that a feasible solution cannot have more than \( \left\lfloor \frac{n}{\ell} \right\rfloor\) columns from \(H(\ell)\).
	However, this bound can be refined in the following way: Let \(\theta\) be the number of connected components from \(H(\ell)\) in a feasible solution. Then the remaining \(k-\theta\) connected components in such solution must have at least \(\alpha\) nodes. As a consequence, \(\theta \ell +(k-\theta) \alpha \leq n\). 
	By taking the maximum value of \(\theta\) satisfying this relation, the result follows.
	 \qed
\end{proof}

The previous theorem can be generalized:

\begin{theorem}\label{th:M3-ImplementedCut}
For every integer \(\ell\) such that \( \left\lceil\frac{n}{k} \right\rceil \leq \ell \leq \beta\),
then the following inequality is valid for {\TercerModelo}:
\[
	\sum_{i=\ell}^{\beta }  \sum_{f\in H(i)} \hspace{-0.5em} x_f
	\leq  
	\left\lfloor \frac{n-k\alpha}{\ell-\alpha} \right\rfloor .
\]
\end{theorem}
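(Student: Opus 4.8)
The plan is to reuse, essentially verbatim, the node-counting argument from the proof of the preceding theorem, now applied to the aggregated left-hand side. First I would fix an arbitrary integer-feasible solution of {\TercerModelo} and set
\[
	\theta := \sum_{i=\ell}^{\beta} \sum_{f\in H(i)} x_f,
\]
the value of the left-hand side at that solution. Since every connected component in \(\mathcal{F}\) has at most \(\beta\) nodes, the inner sums range over all admissible sizes, and \(\theta\) counts exactly the number of selected components whose cardinality is at least \(\ell\).

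Next I would split the \(k\) chosen components into two groups: the \(\theta\) components of size at least \(\ell\), and the remaining \(k-\theta\) components, each of which still has at least \(\alpha\) nodes by definition of \(\mathcal{F}\). Because the selected components partition \(V\), counting nodes gives
\[
	n \;\geq\; \theta\,\ell + (k-\theta)\,\alpha,
\]
which rearranges to \(\theta(\ell-\alpha) \leq n - k\alpha\). Finally I would divide by \(\ell-\alpha\): the feasibility hypothesis \(\alpha < \lceil n/k\rceil \leq \ell\) guarantees \(\ell-\alpha > 0\), so the inequality direction is preserved, yielding \(\theta \leq (n-k\alpha)/(\ell-\alpha)\); integrality of \(\theta\) then produces the floor.

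The argument is a direct generalization of the previous proof, the only substantive change being that \(\theta\) now aggregates every size from \(\ell\) up to \(\beta\) rather than the single size \(\ell\). Consequently, I do not expect a genuine obstacle; the one point worth stating explicitly is that each of the \(\theta\) aggregated components contributes \emph{at least} \(\ell\) nodes (not exactly \(\ell\)), which is precisely what legitimizes the single uniform bound \(\theta\,\ell\) for the whole group and lets the identical inequality \(\theta\ell + (k-\theta)\alpha \leq n\) go through unchanged.
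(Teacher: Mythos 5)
Your proof is correct and follows essentially the same node-counting argument the paper uses: the paper proves the preceding single-size theorem via the relation \(\theta\ell + (k-\theta)\alpha \leq n\) and states this generalization without further proof, exactly as you do, with the only new ingredient being your (correct) observation that each aggregated component of size between \(\ell\) and \(\beta\) still contributes at least \(\ell\) nodes. Your explicit justification that \(\ell - \alpha > 0\) via the feasibility condition \(\alpha < \lceil n/k\rceil \leq \ell\) is a welcome detail the paper leaves implicit.
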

{
\begin{proof}
According to the proof of Theorem \ref{th:Base-M3-ImplementedCut} , let $\theta_i$ be the number of connected components of size \(i\), with \(\ell \leq i \leq \beta\). Then:
\begin{align*}
\sum_{i=\ell}^{\beta}i \theta_i +\left(k-\sum_{i=\ell}^{\beta}\theta_i\right)\alpha
\leq n
\qquad \iff \qquad
\sum_{i=\ell}^{\beta}\theta_i (i-\alpha)\leq n-k\alpha.
\end{align*}
Observe that \(\sum\limits_{i=\ell}^{\beta}\theta_i (i-\alpha) \geq \sum\limits_{i=\ell}^{\beta}\theta_i (\ell-\alpha)\), and therefore
\[
\sum_{i=\ell}^{\beta }  \sum_{f\in H(i)} \hspace{-0.5em} x_f = \sum_{i=\ell}^{\beta}\theta_i
	\leq  
	\left\lfloor \frac{n-k\alpha}{\ell-\alpha} \right\rfloor. \qed
\] 
\end{proof}
}

\begin{theorem}
Let  \(\ell\)  be an integer such that  \( \alpha \leq \ell \leq \beta\), \( S\subset V\) a subset with \(|S|\geq \ell\), and \(  \theta =  \lfloor |S| /  \ell \rfloor  \). Then,  the following inequality is  valid for  {\TercerModelo}:
\[
	\sum_{i=\ell}^{\beta} \sum_{\substack{f\in \mathcal{F}(S):\\|f|=i}} \hspace{-0.5em} x_f\leq \min \{ k-1,\theta\}.
\]
Here, $\mathcal{F}(S)\subset \mathcal{F}$ is a subfamily of connected components in  $(S, E(S))$. 
\end{theorem}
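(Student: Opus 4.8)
The plan is to argue directly at the level of an arbitrary feasible integral solution of {\TercerModelo}, reading the left-hand side as a count. Fix such a solution, which selects a family of $k$ pairwise disjoint connected components whose union is $V$. Since every member of $\mathcal{F}(S)$ is a connected subgraph contained in $S$ and therefore has between $\alpha$ and $\beta$ nodes, the double sum $\sum_{i=\ell}^{\beta}\sum_{f\in\mathcal{F}(S):|f|=i}x_f$ simply counts the selected components $f$ with $f\subseteq S$ and $|f|\geq\ell$. Denoting this number by $t$, the task reduces to showing $t\leq\min\{k-1,\theta\}$, which I would do by establishing the two bounds $t\leq\theta$ and $t\leq k-1$ separately.

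For the bound $t\leq\theta$, I would exploit disjointness together with the capacity of $S$. Let $f_1,\dots,f_t$ be the counted components. As parts of a partition they are pairwise disjoint, and each is contained in $S$, so $\sum_{r=1}^{t}|f_r|\leq|S|$. Because each satisfies $|f_r|\geq\ell$, this yields $t\ell\leq|S|$, and since $t$ is an integer, $t\leq\lfloor|S|/\ell\rfloor=\theta$.

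For the bound $t\leq k-1$, I would use that $S$ is a proper subset of $V$. The $k$ selected components cover $V$, and since $S\neq V$ there is a node $w\in V\setminus S$; the unique component containing $w$ is then not contained in $S$, and hence is not among the counted components. Thus at most $k-1$ of the $k$ components can lie inside $S$, giving $t\leq k-1$. Combining the two bounds produces $t\leq\min\{k-1,\theta\}$, which is exactly the claimed inequality.

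There is no genuine analytic obstacle here, as the statement is a clean counting fact; the only points requiring care are bookkeeping. First, one must recognise that the summation range $\ell\leq i\leq\beta$ together with the size restriction on $\mathcal{F}(S)$ captures precisely the components inside $S$ of size at least $\ell$, since nothing of size below $\alpha$ or above $\beta$ is ever present. Second, the strictness $S\subsetneq V$ is what makes the $k-1$ bound hold and excludes the degenerate situation $S=V$ with $\ell=\alpha$, in which all $k$ parts would qualify. This result then specialises the preceding theorems by localising the counting argument to an arbitrary subset $S$ rather than to the whole of $V$.
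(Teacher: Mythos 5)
Your proof is correct and takes essentially the same approach as the paper: the paper's one-line argument is precisely your packing bound, namely that pairwise disjoint components of size at least $\ell$ contained in $S$ cannot number more than $\lfloor |S|/\ell \rfloor = \theta$. You additionally spell out the $k-1$ bound (via a node of $V\setminus S$ forcing one of the $k$ parts outside $S$, which requires reading $S\subset V$ as strict containment), a point the paper's proof leaves implicit, so your write-up is in fact more complete than the published one.
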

\begin{proof}
The result follows from observing that the subset $S$ cannot contain more than $\theta$ connected components with at least $\ell$ nodes.
\qed
 \end{proof}

\section{Computational Experiments}
\label{resultados}

Computational experiments with our MIP and IP formulations and valid inequalities are carried out in this section. Three sets of tests are presented aiming to solve a wide set of instances that test our exact and heuristic approaches for the partitioning problem into connected components. Emphasis \review{was placed on} testing the different valid inequalities introduced in the previous section as well as evaluating the parametric behaviour of the heuristic.
In this regard, a first set of tests  are performed and conducted by solving the problem using exact algorithms associated with formulations {\PrimerModelo}  and {\SegundoModelo} and combining them with valid inequalities in different ways. 
A second set of experiments compares the proposed spectral clustering method against the Constrained Clustering by Spectral Kernel Learning method of \cite{Li2009}. Solving the column generation formulation {\TercerModelo}, along with the two resulting versions of the spectral-assisted heuristic allows us to have a clear comparison of our proposed method against another clustering algorithm already established in the literature.
At last, a third set of experiments uses the column generation formulation {\TercerModelo} along with the assisted heuristic method designed for solving the underlying pricing problem. The formulations were solved using the MIP solver Gurobi 9.5.1 \cite{Gurobi} with its Python 3.7 API. All the experiments were performed on an Intel Core i7 $3.40$ GHz with $10$ GB RAM running Ubuntu $22.04$. The computation time is limited to \MaxText\(=3600\) seconds for every instance. 

The instances were generated considering different simulated graphs, lower bounds $\alpha$, and partition sizes \(k\). 
The number of nodes was taken between 40 and 70 
reflecting model sizes that were tractable for building the exact MIP formulations and compare their performance efficiently.
Diversity has been sought regarding the density of the graphs, ranging from very sparse (density \(\sim 0.08\)) to moderately dense (density \(\sim 0.71\)). 
The choice of \(\alpha \geq 2\) prevents solutions with components consisting of just one isolated node.
Moreover, constraining \(\alpha\) to a lower range enables the examination of a vast feasible region. This is because larger values of \(\alpha\) lead to the exclusion of numerous connected components, indicating that lower values of this parameter correspond to more challenging instances.
Finally, the range \(4\leq k \leq 10\) comes as a common range for clustering applications. 
%

\begin{table}[]
\setlength{\tabcolsep}{0.4em}
\centering
\fontsize{6.5}{7.5}\selectfont
\def\arraystretch{1.5}
\caption{Solving {\PrimerModelo} and {\SegundoModelo} using a Brach \& Cut algorithm.}
\label{Table:M1M2}
\begin{tabular}{cl rrrrr c rrrrr}
\toprule
\hiderowcolors
\multirow{4}{*}{\begin{tabular}[c]{@{}c@{}}\bf Instance\\ $(n, |E|)$ \\ $(\alpha, k)$ \\ Density \end{tabular}} & \multicolumn{1}{c}{\multirow{4}{*}{\bf Cuts}} & \multicolumn{5}{c}{\PrimerModelo} &  & \multicolumn{5}{c}{\textbf{\SegundoModelo}} 
\\[0.2em] \cline{3-7} \cline{9-13} 
 &  & 
 	\multirow{3}{*}{\bf LB} & \multirow{3}{*}{\bf Obj} & \multirow{3}{*}{\bf Gap} & \multirow{3}{*}{ \begin{tabular}[c]{@{}c@{}}\bf B\&B \\[-0.5em] \bf nodes \end{tabular} } & \multirow{3}{*}{\bf Time} &  & 
 \multirow{3}{*}{\bf LB} & \multirow{3}{*}{\bf Obj} & \multirow{3}{*}{\bf Gap} & \multirow{3}{*}{ \begin{tabular}[c]{@{}c@{}}\bf B\&B \\[-0.5em] \bf nodes \end{tabular} } & \multirow{3}{*}{\bf Time}
 \\[-0.2em]
  \\
 \\ \midrule
\multirow{7}{*}{ \begin{tabular}[c]{c} (30,82)$$ \\[0.5em] $(7,4)$ \\ 0.19 \end{tabular} } 
 & \No & 0 & 83 & 0 & 789936 & 3218 &  & 0 & 84 & 45 & 10892614 & \MaxT \\
 & \LC & 7 & 83 & 16 & 630155 & \MaxT &  & 13 & 84 & 45 & 7916565 & \MaxT \\
 & \SPC & 0 & 83 & 22 & 443806 & \MaxT &  & 7 & 83 & 43 & 8463839 & \MaxT \\
 & \LBC & 43 & 83 & 0 & 8052 & 60 &  & 47 & 83 & 0 & 7628644 & 2708 \\
 & \SPC, \LBC & 43 & 83 & 0 & 6017 & 44 &  & 47 & 83 & 0 & 1430379 & 496 \\
 & \LC, \SPC, \LBC & 47 & 83 & 0 & 7333 & 46 &  & 50 & 83 & 0 & 1119449 & 375 \\
 & \SC & 0 & 83 & 23 & 27579 & \MaxT &  & 7 & 84 & 46 & 10284659 & \MaxT \\ \hline
\multirow{7}{*}{ \begin{tabular}[c]{c} (35,60)$$ \\[0.5em] $(7,5)$ \\ 0.10 \end{tabular} } 
 & \No & 0 & 113 & 32 & 238606 & \MaxT &  & 11 & 113 & 27 & 14781290 & \MaxT \\
 & \LC & 11 & 113 & 0 & 334652 & 2960 &  & 14 & 113 & 26 & 14935646 & \MaxT \\
 & \SPC & 0 & 113 & 25 & 333303 & \MaxT &  & 11 & 113 & 27 & 11650980 & \MaxT \\
 & \LBC & 78 & 113 & 0 & 7249 & 117 &  & 79 & 113 & 0 & 32042 & 11 \\
 & \SPC, \LBC & 78 & 113 & 0 & 882 & 14 &  & 79 & 113 & 0 & 32042 & 13 \\
 & \LC, \SPC, \LBC & 79 & 113 & 0 & 1477 & 15 &  & 79 & 113 & 0 & 27586 & 11 \\
 & \SC & 0 & 115 & 26 & 18861 & \MaxT &  & 11 & 118 & 18 & 28229 & \MaxT \\ \hline
\multirow{7}{*}{ \begin{tabular}[c]{c} (40,136)$$ \\[0.5em] $(5,7)$ \\ 0.17 \end{tabular} } 
 & \No & 0 & 83 & 45 & 65550 & \MaxT &  & 17 & 93 & 57 & 1333975 & \MaxT \\
 & \LC & 17 & 84 & 44 & 74197 & \MaxT &  & 21 & 89 & 61 & 1865731 & \MaxT \\
 & \SPC & 0 & 83 & 45 & 56505 & \MaxT &  & 17 & 93 & 57 & 1477220 & \MaxT \\
 & \LBC & 41 & 81 & 0 & 15777 & 532 &  & 51 & 88 & 27 & 1204068 & \MaxT \\
 & \SPC, \LBC & 41 & 81 & 0 & 5104 & 208 &  & 51 & 88 & 27 & 1239213 & \MaxT \\
 & \LC, \SPC, \LBC & 51 & 81 & 0 & 3368 & 171 &  & 52 & 83 & 27 & 1511141 & \MaxT \\
 & \SC & 0 & 81 & 21 & 5746 & \MaxT &  & 17 & --- & --- & 10306 & \MaxT \\	 \hline
\multirow{7}{*}{ \begin{tabular}[c]{c} (45,147)$$ \\[0.5em] $(5,8)$ \\ 0.15 \end{tabular} } 
 & \No & 0 & 91 & 46 & 47726 & \MaxT &  & 18 & 103 & 65 & 1151606 & \MaxT \\
 & \LC & 18 & 92 & 48 & 74901 & \MaxT &  & 28 & 99 & 63 & 1300198 & \MaxT \\
 & \SPC & 0 & 91 & 46 & 43167 & \MaxT &  & 18 & 103 & 66 & 1001731 & \MaxT \\
 & \LBC & 52 & 91 & 10 & 61276 & \MaxT &  & 65 & 94 & 23 & 893219 & \MaxT \\
 & \SPC, \LBC & 52 & 91 & 0 & 18351 & 906 &  & 65 & 94 & 23 & 912087 & \MaxT \\
 & \LC, \SPC, \LBC & 65 & 91 & 12 & 56061 & \MaxT &  & 69 & 98 & 18 & 896814 & \MaxT \\
 & \SC & 0 & 98 & 33 & 3526 & \MaxT &  & 18 & --- & --- & 7487 & \MaxT \\	\hline
\multirow{7}{*}{ \begin{tabular}[c]{c} (55,96)$$ \\[0.5em] $(6,9)$ \\ 0.06 \end{tabular} } 
 & \No & 0 & 175 & 46 & 12725 & \MaxT &  & 22 & 175 & 47 & 792432 & \MaxT \\
 & \LC & 22 & 175 & 46 & 15201 & \MaxT &  & 38 & 222 & 60 & 864840 & \MaxT \\
 & \SPC & 0 & 175 & 46 & 25391 & \MaxT &  & 22 & 197 & 64 & 1628049 & \MaxT \\
 & \LBC & 128 & 175 & 2 & 50531 & \MaxT &  & 134 & --- & --- & 741070 & \MaxT \\
 & \SPC, \LBC & 128 & 175 & 0 & 1157 & 93 &  & 134 & 175 & 5 & 1217239 & \MaxT \\
 & \LC, \SPC, \LBC & 134 & 175 & 0 & 6474 & 1084 &  & 140 & 175 & 5 & 1627349 & \MaxT \\
 & \SC & 0 & --- & --- & 2297 & \MaxT &  & 22 & --- & --- & 6337 & \MaxT \\	\hline
\multirow{7}{*}{ \begin{tabular}[c]{c} (60,70)$$ \\[0.5em] $(6,9)$ \\ 0.04 \end{tabular} } 
 & \No & 0 & 233 & 0 & 15693 & 3461 &  & 80 & 235 & 43 & 1108251 & \MaxT \\
 & \LC & 80 & 233 & 39 & 26539 & \MaxT &  & 113 & --- & --- & 1837900 & \MaxT \\
 & \SPC & 0 & 233 & 41 & 17250 & \MaxT &  & 80 & --- & --- & 1235168 & \MaxT \\
 & \LBC & 196 & 233 & 12 & 7765 & \MaxT &  & 199 & --- & --- & 947137 & \MaxT \\
 & \SPC, \LBC & 196 & 233 & 11 & 12568 & \MaxT &  & 199 & 246 & 11 & 974604 & \MaxT \\
 & \LC, \SPC, \LBC & 199 & 233 & 11 & 37120 & \MaxT &  & 204 & 233 & 1.29 & 3426116 & \MaxT \\
 & \SC & 0 & --- & --- & 2637 & \MaxT &  & 80 & --- & --- & 5877 & \MaxT 
\\ \bottomrule

\end{tabular}
\\[0.5em]
\flushleft
{Lower bound from linear relaxation (LB), objective value (Obj), gap percentage (Gap), number of explored nodes in the Branch \& Bound tree, and time taken for each instance and cut configuration per model. The symbol {\MaxT} is reported whenever the maximum runtime {\MaxText} was reached.}
\vspace{-1\baselineskip}
\end{table}

The Python libraries \texttt{iGraph} and \texttt{NetworkX} were used for graph construction with several values on the number of nodes \(n\) and edges \(|E|\), respectively. 
In particular, \texttt{NetworkX} was used to simulate dense graphs uniformly sampled at random. More details on the simulation can be found in our open code repository.
For the spectral-assisted heuristic, \texttt{SciPy} was used to compute eigen-information, and \texttt{k-means-constrained} was used for finding clusters based on the spectral encoding described in Algorithm \ref{alg_SpectralPartitioning}. For each instance, several experiments were performed by applying different configurations of valid inequalities for models {\PrimerModelo}  and {\SegundoModelo}.  The inequalities considered are Theorem \ref{th:LC} ({\LC}), Theorem \ref{th:SPC} ({\SPC}), Theorem \ref{th:LBC} ({\LBC}),  Theorem \ref{th:S-C} ({\SC}), and Theorem \ref{th:M3-ImplementedCut} ({\CGC}). All valid inequalites \LC, \SPC, and \LBC~ were added as constraints in the root node of the Branch \& Bound algorithm for both formulations. On the other hand, regarding valid inequalities \SC, the following separation routine was used for {\PrimerModelo} and {\SegundoModelo}: for every subset $S\subset V$, with \(2\leq |S|\leq 3\), if the inequality of Theorem \ref{th:S-C} associated with such subset \(S\) is infringed, then the inequality is added to the corresponding linear relaxation. Additionally, \( (\beta - \lceil n/k \rceil)+1\) valid inequalities of type {\CGC} are  included in the root node of formulation {\TercerModelo}.

The first set of experiments is presented in Table \ref{Table:M1M2}.  The first column displays the instances identified by the 4--tuple $(n,|E|,\alpha,k)$ and the graph density (\( \nicefrac{2|E|}{n(n-1)} \)) is also reported.
The second column describes the configurations used by {\PrimerModelo}. The third to seventh columns report the lower bound from the linear relaxation, the objective function value, the optimality gap percentage, the number of B\&B nodes evaluated in the optimization process, and the CPU time in seconds, respectively. The remaining columns show the lower bound, the objective function value, the optimality gap, the number of Branch \& Bound (B\&B) nodes  and the CPU time in seconds for different configurations of {\SegundoModelo}. All IP formulations were solved using a cut-disabled configuration of Gurobi {(namely, we set the following parameter values \texttt{PreCrush = 1}, \texttt{Cuts = 0}, \texttt{Presolve = 0}).}

The experiments show a remarkable behavior of all the implemented valid inequalities. In particular, the {\LBC} inequality decreases significantly the number of explored nodes of the B\&B tree and the optimality gap. However, for a reduced number of instances, {\LBC} works pretty well when it is combined with {\SPC} (see for example instance $(55,6,9)$).  Additionally, it is important to see that all instances for the first formulation are solved up to optimality considering several configurations.  Specifically, {\PrimerModelo} with {\SPC} \& {\LBC}  reports the smallest CPU time in most of these instances.  \review{Note also that certain combinations of valid inequalities are more effective than others as these combinations approximate the convex hull of the set of feasible integer points. Indeed, observe that the combination {\LC}, {\SPC}, and {\LBC} produces a lower bound that is greater than or equal to any other combination of inequalities, regardless of model {\PrimerModelo} or {\SegundoModelo}. 
At last, observe that any combination of inequalities helps the solver to determine a higher lower bound for all instances of {\SegundoModelo}.
}

\begin{table}[!htb]
\setlength{\tabcolsep}{0.5em}
\centering
\fontsize{6.5}{7.5}\selectfont
\def\arraystretch{1.5}
\caption{Comparison of different spectral methods for {\TercerModelo} }
\label{Table:ColGen_Spectrals}
\begin{tabular}{l c ccccc c ccc c ccc}
\toprule
\hiderowcolors
\hspace{0.5em}\multirow{2}{*}{\begin{tabular}[c]{@{}c@{}}\bf Instance\\ $(n, |E|,  k, \alpha)$\end{tabular}} & \multirow{2}{*}{\bf Density}
 & \multicolumn{4}{c}{ {\TercerModelo} \& {\CGC}--A } &  & \multicolumn{4}{c}{ {\TercerModelo} \& {\CGC}--B} 
 \\
 \cline{3-6} \cline{8-11}
 &&  {\bf Obj} & {\bf Nodes} & {\bf Cols} & {\bf IP Time} & & {\bf Obj} & {\bf Nodes} & {\bf Cols} & {\bf IP Time}
 \\ \midrule
(40, 624, 2, 5)  	& 0.80 & 1554  & 1  & 7300  & 0,08    & & 1557  & 1  & 7720  & 0,13 \\
(50, 919, 3, 7)  	& 0.75 & 1311  & 1  & 8922  & 0,11    & & 1327  & 15  & 6084  & 0,11 \\
(60, 1\,062, 3, 8)  	& 0.60 & 1823  & 130  & 6481  & 2,23    & & 1905  & 270  & 6933  & 3,39 \\
(70, 1\,328, 4, 10)  	& 0.55 & 1950  & 3916  & 8496  & 15,05    & & 1973  & 5186  & 9054  & 18,91 \\
(80, 1\,422, 4, 15)  	& 0.45 & 2016  & 3631  & 10882  & 20,03    & & 2048  & 3620  & 11012  & 21,59 \\
(90, 1\,602, 5, 12) 	& 0.40 & 2048  & 45378  & 15497  & 421,84    & & 2094  & 31181  & 15011  & 297,45 \\
(100, 1\,732, 6, 10) 	& 0.35 & 1926  & 656090  & 21383  & 3378,71    & & 2022  & 548905  & 21724  &  {\MaxT}
\\
\bottomrule
\end{tabular}
\\[0.5em]
\footnotetext
\flushleft
{Objective value (Obj), gap percentage (Gap), and time taken for each instance and configuration per model. The symbol {\MaxT} is reported whenever the maximum runtime {\MaxText} was reached. All tests were run with $\delta = 40$.  \hfill
}
\end{table}

{%
The second set of experiments compares our proposed spectral-assisted heuristic, Algorithm \ref{alg_PartitionG}, and its variant by replacing Step \ref{Step12} with Algorithm 1 of \cite{Li2009}. The two variants are denoted by {\CGC}--A and {\CGC}--B, respectively.
The results are displayed in Table \ref{Table:ColGen_Spectrals}. Similarly to the previous test, the first column describes each instance by its size and partition parameters, the graph density is included in the second column, the third to sixth columns and the seventh to last columns summarize the outputs of solving {\TercerModelo} with {\CGC}--A and {\CGC}--B, respectively. 
Precisely, the value of the objective function, the number of explored nodes in the B\&B process, the total number of columns (i.e., connected components) that were added by the selected heuristic, and the CPU time in seconds for solving the IP were reported for each instance.
}

\begin{table}[!htb]
\setlength{\tabcolsep}{0.5em}
\centering
\fontsize{6.5}{7.5}\selectfont
\def\arraystretch{1.5}
\caption{Comparison of {\PrimerModelo} and {\SegundoModelo} with {\TercerModelo} }
\label{Table:ColGen}
\begin{tabular}{l c ccccc c ccc c ccc}
\toprule
\hiderowcolors
\hspace{0.5em}\multirow{2}{*}{\begin{tabular}[c]{@{}c@{}}\bf Instance\\ $(n, |E|,  k)$\end{tabular}} & \multirow{2}{*}{\bf Density}
 & \multicolumn{5}{c}{ {\TercerModelo} \& \CGC } &  & \multicolumn{3}{c}{ { \PrimerModelo} \& \LBC-\SPC} &  & \multicolumn{3}{c}{ {\SegundoModelo} \& \LBC-\SPC} 
 \\
 \cline{3-7} \cline{9-11} \cline{13-15}
 &&  {\bf Obj} & {\bf Gap} & {\bf $\delta$ } & {\bf Iters} & {\bf Time} & &  {\bf Obj} & {\bf Gap} & {\bf Time} &  & {\bf Obj} & {\bf Gap} & {\bf Time}
 \\ \midrule
(40, 312, 7)  & 0.40 & 55  & 0.00  & 60  & 28  & 0,12    & &62  & 43,55 & \MaxT & &62  & 45,17 & \MaxT \\
(40, 390, 7)  & 0.50 & 59  & 0.00  & 100 & 21  & 0,33    & &67  & 50,75 & \MaxT & &70  & 55,72 & \MaxT \\
(40, 468, 7)  & 0.60 & 72  & 0.00  & 60  & 20  & 0,54    & &79  & 58,23 & \MaxT & &85  & 64,71 & \MaxT \\
(40, 507, 7)  & 0.65 & 88  & 0.00  & 80  & 19  & 1,05    & &107 & 69,16 &\MaxT & &95  & 67,37 & \MaxT \\
(40, 550, 7)  & 0.71 & 78  & 0.00  & 70  & 24  & 0,28    & &94  & 64,9  & \MaxT & &105 & 70,48 & \MaxT \\
\hline
(45, 400, 8)  & 0.40 & 59  & 0.00  & 40  & 23  & 0,07    & &120 & 69,17 & \MaxT && 66  & 43,94 & \MaxT \\
(45, 495, 8)  & 0.50 & 65  & 0.00  & 70  & 25  & 1,29    & &99  & 62,63 & \MaxT& &78  & 58,98 & \MaxT \\
(45, 600, 8)  & 0.61 & 74  & 0.00  & 40  & 26  & 0,7     & &85  & 56,48 & \MaxT & &77  & 57,15 & \MaxT \\
(45, 643, 8)  & 0.65 & 91  & 0.00  & 90  & 19  & 1,99    & &110 & 66,37 & \MaxT  & &97  & 69,08 & \MaxT \\
(45, 700, 8)  & 0.71 & 103 & 0.00  & 100 & 18  & 0,97    & &121 & 69,43 & \MaxT & &120 & 74,17 & \MaxT \\
\hline
(50, 105, 8) & 0.09 & 283 & 0.00  & 40  & 100 & 0,04    & &158 & 17,09 & \MaxT & &145 & 20,69 & \MaxT \\
(50, 220, 8) & 0.18 & 102 & 0.00  & 90  & 31  & 0,32    & &153 & 54,25 & \MaxT & &104 & 29,81 & \MaxT \\
(50, 298, 8) & 0.24 & 69  & 0.00  & 40  & 37  & 0,14    & &137 & 62,05 & \MaxT & &72  & 33,34 & \MaxT \\
(50, 408, 8) & 0.33 & 84  & 0.00  & 70  & 28  & 2,64    & &133 & 67,67 & \MaxT & &86  & 46,52 & \MaxT \\
(50, 529, 8) & 0.43 & 79  & 0.00  & 90  & 22  & 1,73    & &132 & 68,19 & \MaxT & &96  & 57,3  & \MaxT \\
\hline
(55, 350, 8)  & 0.24 & 85  & 0.00  & 70  & 41  & 0,45    & &127 & 53,55 & \MaxT & &101 & 42,58 & \MaxT \\
(55, 445, 8)  & 0.30 & 92  & 0.00  & 80  & 43  & 2,55    & &129 & 57,37 & \MaxT & &--  & --    & \MaxT \\
(55, 550, 8)  & 0.37 & 83  & 0.00  & 40  & 35  & 2,56    & &149 & 68,46 & \MaxT & &96  & 54,17 & \MaxT \\
(55, 594, 8)  & 0.40 & 127 & 0.00  & 90  & 28  & 153,62  & &183 & 74,32 & \MaxT && --  & --    & \MaxT \\
(55, 800, 8)  & 0.54 & 110 & 0.00  & 60  & 33  & 6,36    & &182 & 74,18 & \MaxT & &215 & 81,4  & \MaxT \\
(55, 891, 8)  & 0.60 & 137 & 0.00  & 100 & 31  & 24,53   & &229 & 79,04 & \MaxT & &116 & 65,52 & \MaxT \\
\hline
(60, 150, 9)  & 0.08 & 145 & 0.00  & 70  & 95  & 0,7     & &184 & 35,33 & \MaxT & &--  & --    & \MaxT \\
(60, 182, 9)  & 0.10 & 136 & 0.00  & 90  & 51  & 0,4     & &212 & 51,89 & \MaxT & &--  & --    & \MaxT \\
(60, 291, 9)  & 0.16 & 97  & 0.00  & 40  & 73  & 0,47    & &195 & 65,13 & \MaxT & &--  & --    & \MaxT \\
(60, 500, 9)  & 0.28 & 82  & 0.00  & 60  & 33  & 1,91    & &158 & 67,73 & \MaxT & &--  & --    & \MaxT \\
\hline
(65, 1000, 9) & 0.48 & 197 & 0.00  & 80  & 29  & 1913,95 & &186 & 69,9  & \MaxT & &--  & --    & \MaxT \\
(65, 1248, 9) & 0.60 & 195 & 0.00  & 100 & 29  & 851,48  & &244 & 77,05 & \MaxT& &--  & --    & \MaxT  \\
(65, 1352, 9) & 0.65 & 174 & 0.00  & 100 & 37  & 11,34   & &342 & 83,63 & \MaxT & &188 & 75    & \MaxT \\
(65, 728, 9)  & 0.35 & 101 & 0.00  & 90  & 28  & 2,5     & &165 & 66,07 & \MaxT& &--  & --    & \MaxT \\
(65, 832, 9)  & 0.40 & 151 & 19,21 & 100 & 30  & \MaxT & &161 & 65,22 & \MaxT  & &--  & --    & \MaxT \\
\hline
(70, 1000, 9) & 0.41 & 254 & 34,65 & 70  & 44  & \MaxT & &224 & 72,77 & \MaxT && --  & --    & \MaxT \\
(70, 1500, 9) & 0.62 & 270 & 0.00  & 70  & 39  & 1075,35 & &356 & 82,87 & \MaxT  & &--  & --    & \MaxT \\
(70, 248, 9)  & 0.10 & 149 & 0.00  & 60  & 100 & 1,09    & &210 & 50    & \MaxT && --  & --    & \MaxT \\
(70, 497, 9)  & 0.21 & 128 & 0.00  & 80  & 44  & 58,5    & &262 & 74,81 & \MaxT & &--  & --    & \MaxT \\
(70, 800, 9)  & 0.33 & 638 & 76,33 & 50  & 42  & \MaxT & &174 & 64,95 & \MaxT & &--  & --    & \MaxT
\\
\bottomrule
\end{tabular}
\\[0.5em]
\footnotetext
\flushleft
{Objective value (Obj), gap percentage (Gap), and time taken for each instance and configuration per model. The symbol {\MaxT} is reported whenever the maximum runtime {\MaxText} was reached. \hfill
}
\end{table}

All of the instances were solved up to optimality, except for the instance $(100, 1732, 6, 10)$ when using {\CGC}--B, reporting an optimality gap of $22,09\%$. Nevertheless, a feasible solution was still identified with this setup.
The lower objective values achieved through {\CGC}--A reflect its effectiveness in finding better columns than {\CGC}--B across all tested instances. This trend is also mirrored in the IP solution times where Gurobi was able to find optimal solutions faster by using {\CGC}--A, with the exception of the instance \((90, 1\,602, 5, 12)\) where {\CGC}--B stopped at an suboptimal solution. 
Notwithstanding, the differences in objective and solution times are not large, suggesting that both methods could be effectively used to assist the column generation heuristic. Consequently, {\CGC}--A will be the method of choice for subsequent experiments.

The last set of experiments \review{consists of} verifying the efficiency of the column generation approach assisted by spectral clustering against the exact MIP models. The performance of {\TercerModelo} \& {\CGC}, {\PrimerModelo} \& {\LBC-\SPC}, and {\SegundoModelo} \& {\LBC-\SPC} is compared with the aforementioned process. These experiments are presented in Table \ref{Table:ColGen}, which is organized as follows. The first column displays the instances identified by the 3--tuple $(n,|E|, k)$; the second column reports the graph density given by \( \nicefrac{2|E|}{n(n-1)} \);  columns 3 to 7 present the objective value, optimality gap percentage, the number of columns added in each iteration, the number of column generation iterations, and the running time in seconds, for the column generation approach; the latter experiment considers a maximum number of iterations fixed to \(100\), and the number of columns added corresponds to the value reporting the best objective in a range of 40 to 100 columns. Columns 8 to 10 present objective value, optimality gap percentage and running time in seconds for {\PrimerModelo} \& {\LBC-\SPC}. Finally, the last three columns report similar parameters for {\SegundoModelo} \& {\LBC-\SPC}. For all instances, the minimum partition size \(\alpha\) is set to \(2\).

In Table \ref{Table:ColGen}, the Column Generation approach combined with spectral clustering techniques demonstrates superior performance compared to configurations {\PrimerModelo} \& {\LBC-\SPC} and {\SegundoModelo} \& {\LBC-\SPC} in the majority of instances. Specifically, only in 5 out of 35 instances, {\TercerModelo} \& {\CGC}--A exhibits worse performance in terms of the objective value compared to the others. Regarding computational time, for instances of medium size (\(n < 65\)), the optimal solution of the model associated with {\TercerModelo} \& {\CGC}--A is found within a reasonable time that is bounded above by 153.62 seconds. For larger instances (\(n \geq 65\)), the maximum computational time is reached only in 3 out of 10 instances.

\review{In the context of Column Generation, pricing problems have traditionally been solved using combinatorial heuristics. However, the remarkable behavior of the Column Generation approach, when combined with spectral clustering, opens promising new methods for tackling complex partitioning or more general Combinatorial Optimization problems.
Furthermore, noticing that all instances were artificially-generated, the resulting experiments included a wide range of graph structures with varying densities. The extensive computational study shows that, regardless of the instance, the presented methodology is able to solve the problem efficiently. 
}

\section{Conclusions}
\label{conclusiones}

\review{In this study, we explored the graph partitioning problem in connected components, subject to minimum size constraints, aiming to partition an undirected graph into a predetermined number of subsets while minimizing the total edge cost. We presented novel Mixed Integer Programming formulations with flow constraints and a column generation technique enhanced by spectral clustering. The novelty arises from the integration of spectral clustering and spectral kernel learning techniques for solving the pricing problem within the column generation. This approach resulted in a high-performance algorithm that opens a promising path to solving complex Combinatorial Optimization problems. Additionally, the complexity of the pricing problem was derived and several families of valid inequalities were proved for each model.

The effectiveness of the formulations, valid inequalities, and solution techniques was thoroughly assessed with computational experiments that showed significant reductions in the number of B\&B nodes, optimality gaps, and overall solution times. Notably, the integration of column generation with spectral clustering and spectral kernel learning emerged as the most effective strategy, often outperforming the other MIP formulations. Furthermore, spectral clustering found better columns than spectral kernel learning when used to support the column generation algorithm.

A natural extension of the present study, and perhaps its limitation, is to derive general spectral-assisted methods that work adequately for other variants of the graph partitioning problem. This could lead to more robust solutions for a wider range of problems.

Finally, the extension of spectral-assisted heuristics for other Combinatorial Optimization problems should be further explored. This work demonstrated the strengths of employing advanced clustering techniques and traditional optimization methods in a single framework, paving the way for future research in this direction.
}


\section*{\normalsize{DATA, CODE, AND MATERIALS}}
Code for the experiments, implemented models, and instance generation is available at 

\href{https://github.com/Particionamiento/GraphPartitioningMinSize}{\texttt{https://github.com/Particionamiento/GraphPartitioningMinSize}}


\section*{\normalsize{STATEMENTS AND DECLARATIONS}}

\noindent\textbf{Competing Interests}
The authors declare that there is no conflict of interest.

\vspace{0.75\baselineskip}
\noindent\textbf{Funding}
M.C. was partially supported by Escuela Politécnica Nacional's Master Program in Mathematical Optimization Scholarship.
A.M-T. acknowledges support of MAC-MIGS CDT Scholarship under EPSRC grant EP/S023291/1. 
D.R., R.T., and P.V. acknowledge the support by Escuela Politécnica National Research Project \texttt{PII-DM-2019-03} and \texttt{PII-DM-2020-02}.

\vspace{0.75\baselineskip}
\noindent\textbf{Acknowledgements} A.M-T. would like to thank Dr John Pearson (The University of Edinburgh) for their enriching comments and discussions during the preparation of this draft.

\bibliography{Main_plain_2.bbl}

\end{document}